\DeclareSymbolFont{cyrletters}{OT2}{wncyr}{m}{n}
\DeclareMathSymbol{\Sha}{\mathalpha}{cyrletters}{"58}
\newtheorem{theorem}{Theorem}[section]
\newtheorem{lemma}[theorem]{Lemma}
\newtheorem{proposition}[theorem]{Proposition}
\newtheorem{corollary}[theorem]{Corollary}
\theoremstyle{definition}
\newtheorem{definition}[theorem]{Definition}
\numberwithin{equation}{section} \numberwithin{figure}{section}
\DeclareMathOperator{\Pic}{Pic} 
\DeclareMathOperator{\Aut}{Aut} \DeclareMathOperator{\Spec}{Spec}
 \DeclareMathOperator{\rank}{rank}
 \DeclareMathOperator{\Res}{R}
\DeclareMathOperator{\GL}{GL}
\newcommand{\PGL}{\textrm{PGL}}
\newcommand{\SO}{\textrm{SO}}
\newcommand{\SP}{\textrm{SP}}
\newcommand{\SL}{\textrm{SL}}
\newcommand{\Par}{\textrm{Par}}
\newcommand{\Qbar}{\overline{\QQ}}
\newcommand\FF{\mathbb{F}}
\newcommand\PP{\mathbb{P}}
\newcommand\ZZ{\mathbb{Z}}
\newcommand\NN{\mathbb{N}}
\newcommand\QQ{\mathbb{Q}}
\newcommand\OO{\mathcal{O}}
\renewcommand{\leq}{\leqslant}
\renewcommand{\geq}{\geqslant}
\title[Good reduction of algebraic groups and flag varieties]{Good reduction of algebraic groups and flag varieties}
\author{A. Javanpeykar}
\address{A. Javanpeykar \\
Institut f\"{u}r Mathematik\\
Johannes Gutenberg-Universit\"{a}t Mainz\\
Staudingerweg 9, 55099 Mainz\\
Germany.}
\email{peykar@uni-mainz.de}
\author{D. Loughran}
\address{D. Loughran \\
Leibniz Universit\"{a}t Hannover,
Institut f\"{u}r Algebra, Zahlentheorie
    und Diskrete Mathematik\\
Welfengarten 1\\
30167 Hannover\\
Germany.}
\email{loughran@math.uni-hannover.de}
\subjclass[2010]
{14L15 %group schemes
 (11E72, %Galois cohomology of linear algebraic groups
14G25)%Global ground fileds
}
\keywords{Shafarevich conjecture, reductive groups, flag varieties}
\begin{document}

\begin{abstract}
	In 1983, Faltings proved that there are only finitely many abelian
	varieties over a number field of fixed dimension 
	and with good reduction outside a given set of places. 
	In this paper, we consider the analogous problem for other algebraic
	groups and their homogeneous spaces, such as flag varieties.	  
	% For example, we show that	for each number field $K$, there exists a finite set of places $S$ of $K$
	%for which there are infinitely many curves of genus one with good reduction
	%outside of $S$. Suprisingly, we show that the analogue of Faltings theorem
	%is however true for curves of genus one if one restricts to the empty set of places
	%and assumes the Tate-Shafarevich conjecture.
\end{abstract}

\maketitle
%\tableofcontents

\thispagestyle{empty}

\section{Introduction}

An important classical result  in algebraic number theory is the theorem
of Hermite and Minkowski: there are only finitely many number fields
of bounded degree over $\QQ$ which are unramified outside  a given finite set $S$ of primes of $\QQ$.
It was first noticed by Shafarevich    \cite{Shaf1962} that such finiteness statements occur elsewhere in number
theory.  In particular, he  conjectured an analogous statement for curves over number fields. In his famous paper on Mordell's conjecture \cite{Fal83}, Faltings proved Shafarevich's conjecture, and also showed a corresponding finiteness statement for abelian varieties. Namely that, for $K$ a number field, $g$ an integer and $S$ a finite set of finite places of $K$, the set of $K$-isomorphism classes of $g$-dimensional abelian varieties over $K$ with good reduction outside $S$ is finite.

It is natural to ask whether the analogue of Faltings's result  holds for other
algebraic groups, and more generally for their homogeneous spaces (e.g. torsors).
This brings us to the first result of this paper.

\begin{theorem} \label{thm:reductive}
	Let $K$ be a number field, let $n\in \NN$ and let $B \subset \Spec \OO_K$ be a dense
	open subscheme. Then the set of $B$-isomorphism classes of pairs $(G,E)$,
	where $G$ is an $n$-dimensional reductive group scheme over $B$ and $E$
	is a $G$-torsor, is finite.
\end{theorem}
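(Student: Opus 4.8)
The plan is to reduce the classification of pairs $(G,E)$ to the finiteness of non-abelian \'etale cohomology sets $H^1_{\et}(B,H)$, where $H$ is a smooth affine $B$-group scheme that is an extension of a finite \'etale group scheme by a connected reductive one, and then to deduce that finiteness by combining Hermite--Minkowski with the Borel--Serre finiteness theorems in Galois cohomology. First I would dispose of the ``type''. Since $B$ is connected, the root datum of a reductive $B$-group scheme is locally constant, hence constant, and it is bounded in terms of the relative dimension $n$; by the classification of reductive group schemes over a base (SGA~3) there are therefore only finitely many split Chevalley group schemes $G_0$ over $\ZZ$ of dimension $n$, and every $n$-dimensional reductive $B$-group scheme is a twisted form of $G_{0,B}$ for exactly one of them. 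The $B$-forms of a fixed $G_{0,B}$ are classified by $H^1_{\et}(B,\underline{\Aut}(G_{0,B}))$, and, for a fixed form $G$, the isomorphism classes of $G$-torsors over $B$ are classified by $H^1_{\et}(B,G)$. So it is enough to prove that $H^1_{\et}(B,\underline{\Aut}(G_{0,B}))$ is finite and that $H^1_{\et}(B,G)$ is finite for each of the resulting finitely many forms $G$ (equivalently, one may treat pairs in one stroke via $H^1_{\et}(B,\,G_{0,B}\rtimes\underline{\Aut}(G_{0,B}))$).

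Each of the group schemes $H$ arising above is smooth affine over $B$ and sits in an exact sequence $1\to H^{\circ}\to H\to \pi_0(H)\to 1$ with $\pi_0(H)$ finite \'etale and $H^{\circ}$ connected reductive. The set $H^1_{\et}(B,\pi_0(H))$ is finite: a torsor under a finite \'etale $B$-group scheme of bounded order amounts to a finite \'etale $B$-scheme of bounded degree equipped with extra structure, and there are only finitely many such by Hermite--Minkowski, since a connected finite \'etale cover of $B$ is a finite extension of $K$ unramified at every place of $B$. By the non-abelian twisting formalism, the nonempty fibres of $H^1_{\et}(B,H)\to H^1_{\et}(B,\pi_0(H))$ are quotients of sets of the form $H^1_{\et}(B,{}^{c}H^{\circ})$, where ${}^{c}H^{\circ}$ is a $B$-form of $H^{\circ}$ and hence again connected reductive over $B$. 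Thus everything reduces to the finiteness of $H^1_{\et}(B,M)$ for $M$ a connected reductive $B$-group scheme.

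To handle this last point I would pass to the generic fibre. For every closed point $v\in B$, the group $M_{\OO_v}$ is reductive over the henselian local ring $\OO_v$ with finite residue field $\kappa(v)$, so smoothness and Lang's theorem give $H^1_{\et}(\OO_v,M)=H^1(\kappa(v),M_{\kappa(v)})=1$; combined with Nisnevich's local--global injectivity for torsors under reductive group schemes over Dedekind rings, this yields an injection $H^1_{\et}(B,M)\hookrightarrow H^1(K,M_K)$ whose image consists of classes that become trivial at every non-archimedean place of $K$ lying over $B$, hence at all but finitely many places of $K$. By the Borel--Serre finiteness theorems in Galois cohomology --- which rest on the Hasse principle of Kneser, Harder and Chernousov, applied through the d\'evissage of $M_K$ into its radical torus and the simply connected cover of its derived group --- the set of classes in $H^1(K,M_K)$ that are trivial at all but finitely many places of $K$ is finite: it is assembled from finitely many finite Tate--Shafarevich sets $\Sha^1$ (of twists of $M_K$) and from finitely many local sets $H^1(K_v,M_K)$ at the remaining places. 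Hence $H^1_{\et}(B,M)$ is finite, and the theorem follows.

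The step I expect to be the main obstacle is this last one: guaranteeing that imposing triviality at the good places genuinely cuts out a finite subset of $H^1(K,M_K)$ is precisely where the deep finiteness and Hasse-principle theorems of Borel--Serre, Kneser, Harder and Chernousov are used, and one must also be careful that the comparison between integral and generic torsors (good reduction, Lang's theorem, Nisnevich injectivity) is uniform in $M$; should one instead prefer a self-contained d\'evissage of $H^1_{\et}(B,M)$ over $B$ --- reducing to tori, to simply connected semisimple groups, and to a multiplicative-type centre $\mu$ --- one then has to work with fppf cohomology at residue characteristics dividing $\#\mu$ and invoke the finiteness of $\Pic(B)$ and of $\Br(B)$ together with the finite generation of $\OO(B)^{\times}$.
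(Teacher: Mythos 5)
Your overall strategy (reduce to finiteness of \'etale $\check{\mathrm{H}}^1$ over $B$, then combine Hermite--Minkowski with Borel--Serre/Lang/Nisnevich type inputs) is the right one, and your treatment of torsors under a \emph{fixed} semi-simple form, and of the inner-twisting d\'evissage, matches what the paper does via the Gille--Moret-Bailly finiteness theorem. But there is a genuine gap at the step where you assert that each $H=\underline{\Aut}(G_{0,B})$ is ``smooth affine over $B$'' with $\pi_0(H)$ \emph{finite} \'etale and $H^\circ$ connected reductive. That is true when $G_0$ is semi-simple (SGA~3, Exp.~XXIV), but it fails as soon as the radical of $G_0$ has dimension $\geq 2$: already for $G_0=\Gm^n$ one has $\underline{\Aut}(G_{0,B})\cong \GL_n(\ZZ)_B$, a constant group scheme with trivial identity component and \emph{infinite} component group (and not of finite type over $B$). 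Your Hermite--Minkowski argument for $\check{\mathrm{H}}^1(B,\pi_0(H))$ only covers torsors under finite \'etale groups of bounded order, so the whole torus direction --- including the simplest instances of the theorem, namely $n$-dimensional tori over $B$ --- is not actually handled by your reduction. Closing this gap requires a separate input: a torsor under a constant group scheme $\Gamma_B$ is a conjugacy class of continuous homomorphisms $\pi_1^{\mathrm{et}}(B)\to\Gamma$, whose image is a finite subgroup of $\Gamma$; one then needs that $\GL_n(\ZZ)$ has only finitely many finite subgroups up to conjugacy and is finitely presented (Mazur's ``decent groups'' lemma), after which Hermite--Minkowski applies. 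The paper avoids meeting $\underline{\Aut}$ of a general reductive group altogether by splitting $G$ into its radical $\mathscr{R}(G)$ (a torus, handled by the $\GL_n(\ZZ)$ argument just described) and its derived group $\mathscr{D}(G)$ (semi-simple, handled by $\underline{\Aut}$ affine of finite type plus Gille--Moret-Bailly), and then bounding the possible central finite flat subgroups $\mathscr{R}(G)\cap\mathscr{D}(G)$; some such d\'evissage, or an explicit analysis of the infinite discrete part of $\underline{\Aut}(G_0)$, is unavoidable.

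Two smaller points. First, you assert without proof that there are only finitely many root data of total dimension $n$; this is true but needs an argument (essentially the same radical/derived-group/finite-centre d\'evissage as above), since a priori a torus factor admits infinitely many finite subgroups. Second, the claimed injection $\check{\mathrm{H}}^1(B,M)\hookrightarrow H^1(K,M_K)$ is false in general: already for $M=\Gm$ the kernel is $\Pic(B)$, and for general reductive $M$ the fibres of this map are class sets of twisted forms, which are finite by Borel's finiteness of class numbers but need not be trivial. The conclusion you want still holds, but the correct statement is finiteness of the fibres together with finiteness of the locally-almost-everywhere-trivial part of $H^1(K,M_K)$, which is exactly what the proof of \cite[Prop.~5.1]{GMB13} assembles.
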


Here we say that $(G_1,E_1)$ is $B$-isomorphic to $(G_2,E_2)$ if $G_1 \cong G_2$ as
group schemes over $B$ and there exists a $B$-isomorphism $E_1 \cong E_2$
which respects the action of $G_1$ and $G_2$.

In this paper all reductive group schemes are connected (see Definition~\ref{def:reductive}).
The analogous statement of Theorem \ref{thm:reductive} \emph{fails} for disconnected groups;
for example the constant group scheme $\ZZ/2\ZZ$
over $\ZZ$ admits infinitely many non-isomorphic subgroup schemes, given by simply deleting
the non-identity component above each prime $p$. 
Taking $E=G$ in Theorem \ref{thm:reductive}, we obtain the following
immediate corollary.

\begin{corollary} \label{cor:reductive_sch}
	Let $K$ be a number field, let $n\in \NN$ and let $B \subset \Spec \OO_K$ be a dense 
	open subscheme.
	Then the set of $B$-isomorphism classes of $n$-dimensional reductive group schemes over $B$
	is finite.
\end{corollary}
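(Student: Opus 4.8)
\smallskip
\noindent\emph{Proof proposal.}
The quickest route is to apply Theorem~\ref{thm:reductive} with $E=G$ the trivial torsor: by definition a $B$-isomorphism of pairs $(G_1,G_1)\cong(G_2,G_2)$ includes a $B$-isomorphism $G_1\cong G_2$ of group schemes, so $G\mapsto(G,G)$ embeds the set in question into the finite set of that theorem. Since this deduction is purely formal, I also indicate a direct proof; it is essentially the case $E=G$ of the argument behind Theorem~\ref{thm:reductive}, and it makes plain where the real difficulty sits.

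First I would translate the statement into Galois cohomology using the structure theory of reductive group schemes from SGA3. Since $B$ is connected, the geometric root datum of a reductive $B$-group is constant, and there are only finitely many root data of dimension $n$ (the number of roots bounds the rank of the semisimple part, and between the root lattice and the weight lattice there are finitely many intermediate lattices); so it suffices to bound, for each such root datum $\mathcal R$, the reductive $B$-groups of type $\mathcal R$. These are exactly the forms of the split Chevalley group scheme $G_{\mathcal R}$, hence are classified by $H^1_{\et}(B,\underline{\Aut}(G_{\mathcal R}))$. Feeding the extension $1\to G_{\mathcal R}^{\mathrm{ad}}\to\underline{\Aut}(G_{\mathcal R})\to\underline{\mathrm{Out}}(G_{\mathcal R})\to1$, with $\underline{\mathrm{Out}}(G_{\mathcal R})$ finite \'etale, into the standard twisting argument for the associated sequence of pointed sets, I would reduce finiteness to two statements: (a) $H^1_{\et}(B,F)$ is finite for every finite \'etale $B$-group scheme $F$; and (b) $H^1_{\et}(B,H)$ is finite for every semisimple $B$-group scheme $H$. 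For (a): the finite \'etale $B$-schemes underlying such torsors have degree bounded in terms of $n$ and are unramified over $K$ outside $S:=(\Spec\OO_K)\setminus B$, so there are finitely many by Hermite--Minkowski.

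Statement (b) is the heart of the matter. The key point is that good reduction forces local triviality: for a finite place $v$ of $K$ lying in $B$, every $H$-torsor over $\OO_{K_v}$ is trivial, since by smoothness it acquires a section once its special fibre does, and the special fibre is a torsor under a connected group over the finite field $\kappa(v)$, hence trivial by Lang's theorem. Consequently the generic-fibre map $\rho\colon H^1_{\et}(B,H)\to H^1(K,H_K)$ takes values among the classes that die in $H^1(K_v,H_{K_v})$ for every finite place $v$ in $B$; its non-empty fibres are finite, being class sets of twists of $H$ over $B$ (finiteness of class numbers of reductive groups, Borel); and this set of ``everywhere $B$-locally trivial'' classes is itself finite, since it maps to the finite set $\prod_v H^1(K_v,H_{K_v})$ over the finitely many places $v$ of $K$ outside $B$ (those in $S$ and the archimedean ones) with fibres that are (twisted) Tate--Shafarevich sets $\Sha^1$, finite by Borel--Serre. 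Hence $H^1_{\et}(B,H)$ is finite, and the reduction closes.

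I expect the main obstacle to be step (b), and within it its two non-formal inputs: the finiteness of $\Sha^1(K,H_K)$ and of the class number of $H$ over $\OO_{K,S}$, which are the deep finiteness theorems of Borel and Borel--Serre in the arithmetic theory of linear algebraic groups. By comparison, the SGA3 reductions and the Lang-theorem local-triviality step are formal, if a little technical.
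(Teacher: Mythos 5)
Your first paragraph is exactly the paper's own proof: Corollary~\ref{cor:reductive_sch} is deduced from Theorem~\ref{thm:reductive} simply by taking $E=G$, so the deduction is correct and identical to the paper's. (Your supplementary direct argument is not needed for this statement and organises the work differently from the paper's proof of Theorem~\ref{thm:reductive} itself, which splits $G$ via its radical and derived subgroup rather than via forms of a split group classified by $\check{\mathrm H}^1(B,\Aut_B G)$, but the main deduction is the same.)
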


Corollary \ref{cor:reductive_sch} can be used to obtain a result analogous to Faltings's theorem. 
Namely, let $S$ be a finite set of finite places of a number
field $K$. We say that a reductive algebraic group $G$
over $K$ has good reduction outside  $S$ if there exists a reductive group scheme over $\OO_K[S^{-1}]$ whose generic fibre is $K$-isomorphic to $G$. 

\begin{corollary}\label{cor:reductive_gr}
	Let $n\in \NN $ and let $S$ be a finite set of finite  places of a  number field $K$.
	Then the set of $K$-isomorphism classes of $n$-dimensional reductive algebraic
	groups over $K$ with good reduction outside  $S$
	is finite.
\end{corollary}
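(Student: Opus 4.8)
The plan is to deduce Corollary \ref{cor:reductive_gr} from Corollary \ref{cor:reductive_sch} by a limiting argument over dense open subschemes of $\Spec \OO_K$. First I would set $B_0 = \Spec \OO_K[S^{-1}]$, which is a dense open subscheme of $\Spec \OO_K$. By Corollary \ref{cor:reductive_sch}, the set of $B_0$-isomorphism classes of $n$-dimensional reductive group schemes over $B_0$ is finite; say representatives are $\mathcal{G}_1, \dots, \mathcal{G}_r$. The key claim is that every $n$-dimensional reductive algebraic group $G$ over $K$ with good reduction outside $S$ has generic fibre $K$-isomorphic to some $(\mathcal{G}_i)_K$. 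This is almost immediate from the definition of good reduction outside $S$: such a $G$ extends to \emph{some} reductive group scheme over $\OO_K[S^{-1}] = B_0$, and that group scheme is $B_0$-isomorphic to one of the $\mathcal{G}_i$, hence its generic fibre is $K$-isomorphic to the corresponding $(\mathcal{G}_i)_K$.

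Wait — I should be slightly careful, since the definition of good reduction outside $S$ in the excerpt allows the extension to be over $\OO_K[S^{-1}]$, which is exactly $B_0$, so there is no need to pass to smaller opens at all; the deduction is direct. So the steps are: (1) observe $B_0 := \Spec \OO_K[S^{-1}]$ is a dense open subscheme of $\Spec \OO_K$; (2) apply Corollary \ref{cor:reductive_sch} with $B = B_0$ to get finitely many isomorphism classes $\mathcal{G}_1, \dots, \mathcal{G}_r$ of $n$-dimensional reductive group schemes over $B_0$; (3) for any $G$ over $K$ with good reduction outside $S$, pick a witnessing reductive group scheme $\mathcal{G}$ over $B_0$ with $\mathcal{G}_K \cong G$; (4) conclude $\mathcal{G} \cong \mathcal{G}_i$ over $B_0$ for some $i$, hence $G \cong \mathcal{G}_K \cong (\mathcal{G}_i)_K$ over $K$; (5) therefore the $K$-isomorphism class of $G$ lies in the finite set $\{(\mathcal{G}_1)_K, \dots, (\mathcal{G}_r)_K\}$, which proves finiteness.

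There is essentially no obstacle here: the corollary is a formal consequence of Corollary \ref{cor:reductive_sch} together with the definition of good reduction, and the only thing to check is the compatibility of dimensions — namely that the generic fibre of an $n$-dimensional reductive group scheme over $B_0$ is an $n$-dimensional algebraic group over $K$, and conversely, which holds because $B_0$ is flat and of relative dimension zero over $\Spec \ZZ$ in the relevant sense, so the relative dimension of $\mathcal{G}/B_0$ equals $\dim_K \mathcal{G}_K$. If one wanted to be fully rigorous about the word ``$n$-dimensional'' one could note that $\dim \mathcal{G}_K$ can be computed fibrewise and agrees with $n$ by definition of $\mathcal{G}$ being $n$-dimensional as a group scheme over $B_0$. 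The whole argument is a short paragraph; the substance of the theorem is entirely contained in Theorem \ref{thm:reductive} and Corollary \ref{cor:reductive_sch}.
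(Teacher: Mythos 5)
Your proposal is correct and matches the paper's (implicit) argument exactly: the paper treats this corollary as an immediate consequence of Corollary \ref{cor:reductive_sch} applied to $B = \Spec \OO_K[S^{-1}]$, using the definition of good reduction to extend each such $G$ to a reductive group scheme over that base and then passing to generic fibres. No further comment is needed.
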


For motivation, let us briefly explain some special cases of our results.
Firstly, for each field extension $K \subset L$
of degree $n$ of a number field $K$, there is an $n$-dimensional algebraic
torus $\Res_{L/K} (\mathbb G_{m,L})$ over $K$ given by the Weil restriction of $\mathbb G_{m,L}$.
%Non-isomorphic fields give rise to non-isomorphic tori (not true! Arithmetically equivalent fields etc..).
Here for such a torus to have good reduction outside $S$, the corresponding
field $L$ must be unramified outside  $S$. By Hermite-Minkowski
there are only finitely many such fields, in particular, there
are indeed only finitely many such tori. Secondly, for 
a central simple algebra $D$ of dimension $n^2$ over $K$, we may consider
the corresponding special linear group $\SL(D)$.
If $\SL(D)$ has good reduction outside $S$, then the central simple algebra $D$ must be unramified outside
 $S$. However by class field theory, there are only finitely
many such central simple algebras of fixed dimension, as expected.

These discussions should make clear that our proofs will 
require certain finiteness results for the cohomology
of group schemes. These will be provided to us by a 
theorem of Gille and Moret-Bailly \cite{GMB13} (which in turn
builds upon work of Borel and Serre \cite{BS64}). This result encapsulates
for example the finiteness of Tate-Shafarevich groups of linear algebraic
groups. %We use this to prove Theorem \ref{thm:reductive} by first handling the special cases of tori and semi-simple groups.% We then combine these results together via the theory of the derived subgroup to obtain the result for reductive groups.

The assumption that the group be \emph{reductive} in our results
is crucial; in \S \ref{sec:unipotent} we shall construct explicit counter-examples which show
that the analogues here for unipotent group schemes fail.

Reductive algebraic groups admit other natural homogeneous spaces (besides torsors),
namely flag varieties. As an application of our methods
we prove the following, which was in fact the original motivation for this paper.

\begin{theorem}\label{thm:flags}
	 Let $n\in \NN $ and let $S$ be a finite set of finite  places of a number field $K$.
	Then the set of $K$-isomorphism classes of flag varieties over $K$
	of dimension $n$  with good reduction outside $S$ is finite.
\end{theorem}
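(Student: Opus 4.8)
The plan is to reduce the statement about flag varieties to the already-established finiteness results for reductive group schemes, namely Corollary~\ref{cor:reductive_sch}, together with finiteness of forms controlled by Galois cohomology. First I would recall the structure theory: over an algebraically closed field, a flag variety is a quotient $G/P$ where $G$ is a semisimple group of adjoint type (we may take $G$ adjoint without loss of generality, since the flag variety only sees the adjoint quotient) and $P$ is a parabolic subgroup; moreover $\Aut(G/P)$ is again a semisimple adjoint group, and in all cases except a short explicit list it equals $G$ itself. A flag variety over $K$ of dimension $n$ is then a $K$-form of some such $\bar G/\bar P$, and by descent these forms are classified by $H^1(K, \Aut_{\bar K}(\bar G / \bar P))$, i.e.\ essentially by $H^1(K, G^{\mathrm{ad}})$ for the relevant adjoint group $G$. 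Having good reduction outside $S$ should translate into the form being unramified outside $S$, i.e.\ coming from a class in $H^1(B, \mathcal{G})$ for a suitable reductive group scheme $\mathcal{G}$ over $B = \Spec \OO_K[S^{-1}]$ (after possibly shrinking $B$).

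The argument would then proceed in the following steps. (1) Spreading out: given a flag variety $X/K$ with good reduction outside $S$, spread it to a smooth projective scheme $\mathcal{X} \to B$ over some dense open $B$; after shrinking $B$ I may assume all geometric fibres are flag varieties and that the relative automorphism scheme $\underline{\Aut}_{B}(\mathcal{X})$ is a reductive (adjoint) group scheme $\mathcal{G}$ over $B$, with $\mathcal{X}$ a form of a ``split'' model $\mathcal{G}/\mathcal{P}$. There are only finitely many isomorphism types of split model over $B$ once the dimension $n$ is fixed, because a split flag variety is determined by a root datum plus a subset of simple roots, and dimension bounds the rank; this also bounds $\dim \mathcal{G}$. (2) By Corollary~\ref{cor:reductive_sch} there are only finitely many possibilities for $\mathcal{G}$ up to $B$-isomorphism (over all sufficiently small $B$, using that shrinking $B$ is harmless for a finiteness statement). (3) For each fixed $\mathcal{G}$ and each fixed split model, the $B$-forms of $\mathcal{G}/\mathcal{P}$ are classified by the pointed set $H^1_{\et}(B, \underline{\Aut}(\mathcal{G}/\mathcal{P}))$, and $\underline{\Aut}(\mathcal{G}/\mathcal{P})$ is itself an affine group scheme of finite type over $B$ (in fact reductive). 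By the theorem of Gille--Moret-Bailly \cite{GMB13} cited in the introduction, $H^1_{\et}(B, H)$ is finite for such $H$; hence only finitely many forms arise. (4) Finally, descend from $B$ back to $K$: a $K$-isomorphism class of flag variety with good reduction outside $S$ gives rise to one of finitely many $B$-models, and the generic fibre functor is what we want, so finiteness over $B$ yields finiteness over $K$. (One must be slightly careful that different $B$-models could have the same generic fibre, but that only makes the count smaller.)

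The main obstacle I expect is step~(1): controlling the automorphism scheme of a flag variety in families and over $B$ rather than over a field. Over an algebraically closed field the facts about $\Aut(G/P)$ are classical (due to Demazure), but I need them in the relative setting --- that $\underline{\Aut}_B(\mathcal{X})$ is representable by a smooth affine $B$-group scheme with reductive fibres, that it is itself reductive over $B$, and that $\mathcal{X}$ really is a torsor-twisted form of a split model for \emph{this} group scheme. This requires either citing relative representability of $\underline{\Aut}$ for smooth projective schemes (Grothendieck/Matsusaka--Mumford), plus the fact that $H^1(X, T_X)$ vanishes for flag varieties (so deformations are unobstructed and $\underline{\Aut}$ has the ``expected'' dimension), plus a descent argument to identify the form. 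A secondary technical point is handling the finitely many exceptional flag varieties where $\Aut(G/P) \supsetneq G$ --- these (certain Grassmannians, odd-dimensional quadrics, etc.) must be listed and treated by hand, but since the list is finite and $n$ is fixed this only adds finitely many cases and does not affect the overall finiteness. A final routine issue is the compatibility of ``good reduction'' as defined via the existence of a smooth projective model with ``unramified'' as a cohomological condition; this is a standard spreading-out argument.
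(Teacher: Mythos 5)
Your overall strategy is sound and its skeleton coincides with the paper's: pass to flag schemes over $B=\Spec\OO_K[S^{-1}]$, use Demazure's theorem to realise such a scheme as a homogeneous space under its (semi-simple) automorphism group scheme, bound that group in terms of $n$, invoke the finiteness of semi-simple group schemes over $B$, and then show that a fixed group admits only finitely many flag schemes. Where you genuinely diverge is in this last step: you propose to fix a split model $\mathcal{G}/\mathcal{P}$ and classify its forms by $\check{\mathrm{H}}^1(B,\Aut_B(\mathcal{G}/\mathcal{P}))$, finite by Gille--Moret-Bailly. The paper instead avoids twisting altogether here (Lemma~\ref{lem:finitely_many_flags}): for a fixed semi-simple $G$ over $B$ it uses the scheme of parabolics $\Par_G$, which is smooth projective hence Noetherian, and observes that any flag scheme for $G$ embeds $G$-equivariantly as an irreducible component of $\Par_G$ via $x\mapsto \mathrm{Stab}(x)$. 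This buys you several things your route has to supply by hand: you need not enumerate split models, need not prove that an arbitrary flag scheme over $B$ is \'etale-locally isomorphic to a split $\mathcal{G}/\mathcal{P}$ (true, via \'etale-local existence of sections and conjugacy of parabolics of a given type, but it is a real step), and need not separately treat the exceptional cases where $\Aut(G/P)$ is strictly larger than $G^{\mathrm{ad}}$ --- the paper simply works with $\Aut_B(X)^0$ throughout and never compares it with a prescribed $G$. The cohomological finiteness input enters the paper only once, upstream, in classifying the semi-simple groups themselves (Lemma~\ref{lem: twisting lemma}).

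One point you assert without argument deserves flagging: that ``dimension bounds the rank'' of the automorphism group. This is exactly Lemma~\ref{lem:rank} of the paper, and it is not free: the proof uses Borel's fixed point theorem to produce a point $x$ fixed by a maximal torus $T$ and then shows the $T$-action on $\mathrm{Tan}_x(X)$ is faithful (via smoothness of fixed loci), so that diagonalising gives $\dim X\geq \dim T$. Without this bound you cannot invoke Corollary~\ref{cor:reductive_sch} or Proposition~\ref{prop:ss}, since $\dim X=n$ alone does not a priori bound $\dim\Aut(X)^0$; so your step (1) has a genuine hole there, albeit one filled by a known argument. The remaining steps (shrinking $B$ being harmless, and descending from finiteness over $B$ to finiteness over $K$ via surjectivity of the generic-fibre map) are handled correctly.
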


Here by \emph{good reduction}, we mean good reduction as a flag variety 
(see Definition \ref{def:good_reduction_flag}). Theorem \ref{thm:flags} generalises some already known results.
Namely, the analogue of Theorem \ref{thm:flags} was already known for Brauer-Severi varieties \cite[Thm.~5.3]{Scholl} and quadric hypersurfaces in projective space \cite{JL}. New special cases include, for example, finiteness results for twists of Grassmannians and twists of products of projective spaces.

%It seems that the notion of generalized Severi-Brauer varieties is defined also in the book of involutions; see paper of Maxim Zykhonpvich for instance.

%We should use products of twisted forms of Grassmannians (Section 1 of the book of involutions).

%Our result applies to twists of classical Grassmannians, unitary Grassmannians, orthogonal Grassmannians, and symplectic Grassmannians; see ??? for definitions.

%New special cases include, for example, finiteness results  for twists of a Grassmannian and finiteness results for generalised Brauer-Severi varieties \`{a} la Colliot-Th\'{e}l\`{e}ne andSwinnerton-Dyer \cite[Definition 2.1.2]{CTSD94}.  

The authors have recently initiated a program of study
into the ``Shafarevich conjecture for Fano varieties''. The case of Fano varieties of dimension $1$ (i.e.~conics)
is classical and the case of dimension $2$ is dealt with in \cite{Scholl}. The case of flag varieties,
handled in Theorem \ref{thm:flags}, is in some respects the next easiest case in this programme. In \cite{JL} the authors have proven other cases of this
for certain complete intersections in projective space. These papers should all be viewed as certain aspects of the same research programme.

Let $n \in \NN$. Most of our results generalise from algebraic groups of dimension $n$
over number fields to arbitrary global fields, provided
that one avoids certain small characteristics depending on $n$. For example,
take $K=\FF_p(x)$ for some prime $p$. Then
$K$ admits infinitely many Artin-Schreier extensions $L$ of degree $p$
ramified only at $\infty$ (see e.g.~\cite[\S 6.4]{Sti09}).
Moreover one can check that the $p$-dimensional tori $\Res_{L/K} (\mathbb G_{m,L})$ give rise to infinitely many
$K$-isomorphism classes with good reduction outside of $\infty$ 
(see \cite[Ex.~7.2.8]{Conrad} for the case of semi-simple groups).

There is no common generalisation of our results and the results
of Faltings, applying to groups which are built out of abelian varieties and reductive groups. 
For example, there are infinitely many semi-abelian
schemes over $\ZZ$, up to $\ZZ$-isomorphism. Explicitly, such schemes occur as the identity components of the  N\'{e}ron models 
of the elliptic curves $$y^2  +xy = x^3+n, \quad n\in \ZZ \setminus \{0\},$$  since these curves are semi-stable over $\ZZ$.
Moreover, the analogue of Theorem \ref{thm:reductive} is in fact
\emph{false} for abelian varieties (see \cite[p.~241]{Maz86}). 
Namely, there exists a non-empty finite set of places $S$ of $\QQ$, such that the set of $\QQ$-isomorphism classes of genus one curves over $\QQ$ with good reduction outside  $S$ is infinite (note that such curves are torsors for their Jacobians).
This is due to the fact that, for an elliptic curve $E$ over $\QQ$, the ``$S$-Tate-Shafarevich group''
$$\Sha_S(E) = \ker\left(H^1(\QQ,E) \to \prod_{p \not \in S} H^1(\QQ_p,E)\right)$$ 
can be infinite.
In particular, our results for reductive groups are actually \emph{stronger} 
than the corresponding results for abelian varieties.

\subsection*{Acknowledgements} The authors would like to thank  Jilong Tong for his help in Section \ref{sec:unipotent} and Ben Webster for his help in proving Lemma \ref{lem:rank}. The authors would also like to thank  Manfred Lehn, Adam Morgan, Nikita Semenov, Duco van Straten and Ronan Terpereau for helpful discussions, and the anonymous referee for numerous useful comments. The first named author gratefully acknowledges the support of SFB/Transregio 45.

\section{Reductive groups and torsors}
The aim of this section is to prove Theorem \ref{thm:reductive}.

\subsection{Finiteness results in \'{e}tale cohomology}\label{section: GMB}
In this paper we shall achieve our finiteness results by using a relationship
between good reduction and \'{e}tale cohomology. We gather here the finiteness results we shall require.
Some of these results hold in greater generality than stated, but for  simplicity we only work in the generality 
which we require.

Let $B$ be a dense open subscheme of $\Spec \OO_K$, where $K$ is a number field and $\OO_K$ is its ring of integers. Note that there exists a finite set of finite places $S$ of $K$ such that $B = \Spec \OO_K[S^{-1}]$.
If $G$ is a smooth separated 
group scheme over $B$, we denote by
$\check{\mathrm{H}}^1(B,G)$ the \v{C}ech cohomology set of $B$ with coefficients in $G$
with respect to the \'etale topology \cite[\S III.2]{MilneEC}.  Recall that a faithfully flat and locally of finite type $B$-scheme $E$ is a \emph{$G$-torsor} if it is endowed with a left action of $G$ such that the morphism \[E\times_B G \to E\times_{B} E, \quad (x,g)\mapsto (x,x\cdot g)\] is an isomorphism. By \cite[Thm.~III.4.3]{MilneEC},
\cite[Prop.~III.4.6]{MilneEC} and \cite[Thm.~XI.3.1]{Ray70}), the pointed set $\check{\mathrm H}^1(B,G)$ classifies $G$-torsors over $B$. Here
 we have the following finiteness theorem due to Gille and Moret-Bailly.
\begin{lemma}\label{thm: GMB onedim}
	Let $B \subset \Spec \OO_K$ be a dense open subcheme and let $G$ be a smooth affine group scheme of finite type over $B$. Then $\check{\mathrm{H}}^1(B,G)$ is finite.
\end{lemma}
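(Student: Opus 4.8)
The plan is to deduce this from the main finiteness theorem of Gille--Moret-Bailly \cite{GMB13}, after a couple of formal reductions. Since $B$ is dense open in $\Spec\OO_K$ we may, as already observed, write $B=\Spec\OO_K[S^{-1}]$ for a finite set $S$ of finite places of $K$, so that the base is the ring of $S$-integers in a number field, which is a special case of the setting of \cite{GMB13}. As $G$ is smooth, its \'etale and fppf cohomology agree, and the \v{C}ech set $\check{\mathrm H}^1(B,G)$ coincides with the \'etale cohomology pointed set $\mathrm H^1_{\et}(B,G)$ --- both classifying the $G$-torsors over $B$, which by the references recalled above are representable by schemes. It therefore suffices to prove that $\mathrm H^1_{\et}(B,G)$ is finite, and this is precisely the content of \cite{GMB13} for a smooth affine group scheme of finite type over such a ring.

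For orientation, the argument behind this statement --- which goes back to Borel--Serre \cite{BS64} in the number field case --- runs roughly as follows. One chooses a closed immersion $G\hookrightarrow\GL_{m,B}$ for some $m$, so that $X:=\GL_m/G$ is a quasi-projective $B$-scheme fitting into an exact sequence of pointed sets
\[
X(B)\longrightarrow\check{\mathrm H}^1(B,G)\longrightarrow\check{\mathrm H}^1(B,\GL_m).
\]
The target classifies rank-$m$ locally free $\OO_K[S^{-1}]$-modules, which by the structure theory of projective modules over a Dedekind domain are classified by $\Pic(B)$; hence $\check{\mathrm H}^1(B,\GL_m)$ is finite, by finiteness of the class group. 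It then remains to bound each fibre of the left-hand map, and each such fibre is, after twisting $G$ and $X$, a set of orbits of $\GL_m(B)$ acting on $X(B)$. Finiteness of this orbit set is exactly the reduction-theoretic input supplied by \cite{GMB13}; over a number field it amounts to classical reduction theory for arithmetic groups acting on affine varieties.

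Consequently the work to be done is essentially bookkeeping: one checks that the hypotheses of the cited theorem hold here --- that $G$ is smooth, affine and of finite type over $\OO_K[S^{-1}]$ --- and that its conclusion is genuinely the finiteness of $\mathrm H^1$ over $\OO_K[S^{-1}]$, rather than of a relative class set or of a Tate--Shafarevich-type kernel. The single genuinely hard ingredient, were one to reprove the statement rather than quote it, is the reduction theory behind the orbit-finiteness step; the remaining pieces --- finiteness of the class group, the cohomology exact sequence, and the comparison between derived and \v{C}ech $\mathrm H^1$ --- are formal.
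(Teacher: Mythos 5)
Your proof matches the paper's: the result is quoted directly as a special case of \cite[Prop.~5.1]{GMB13}, exactly as you do, and your preliminary reductions (writing $B=\Spec\OO_K[S^{-1}]$ and identifying $\check{\mathrm H}^1$ with the torsor classification) are the same ones the paper sets up in \S\ref{section: GMB}. The Borel--Serre-style sketch you add is accurate background but not required; the paper's proof is a one-line citation.
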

\begin{proof}
	This is a special case of \cite[Prop.~5.1]{GMB13}.
\end{proof}

If $G_1$ and $G_2$ are group schemes over $B$, we shall say that $G_2$ is a \emph{twist} of $G_1$
if $G_2$ is $B$-isomorphic to $G_1$ locally for the \'etale topology on $B$.
If $\mathrm \Aut_B G_1$ is representable by a smooth separated group scheme,
then $\check{\mathrm H}^1(B,\Aut_B G_1)$ classifies the twists of $G_1$ (see \cite[\S III.4]{MilneEC}). 

%\begin{remark}
%The  analogue of the theorem of Gille and Moret-Bailly for higher-dimensional arithmetic schemes is false; see Section \ref{sect: counter} for explicit counterexamples.
%\end{remark}
 
We shall require an analogue of Lemma \ref{thm: GMB onedim} for some
constant group schemes which are not of finite type. %Hence not affine
For an (abstract) group $G$, we let $G_B$ denote the constant group scheme over $B$
associated to $G$. As a scheme this is a disjoint union $\coprod_{g \in G} B$,
and the group scheme structure is defined in the obvious way.
Note that $G_B$ is faithfully flat and locally of finite type over $B$,
but it is of finite type over $B$ if and only if 
$G$ is finite.
\begin{lemma}\label{lem: jz}
Let $B \subset \Spec \OO_K$ be a dense open subscheme and let $n$ be a positive integer. Then  $\check{\mathrm H}^1(B,\GL_n(\ZZ)_B)$ is finite.
\end{lemma}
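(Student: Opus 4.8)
The plan is to reduce the statement to the finite-type situation already handled in Lemma~\ref{thm: GMB onedim}; the point is that $\GL_n(\ZZ)_B$ is locally of finite type but \emph{not} of finite type over $B$, so the Gille--Moret-Bailly result does not apply to it directly. Since $\Spec\OO_K$ is irreducible, $B$ is connected; fix a geometric point $\bar b$ and set $\pi_1 := \pi_1(B,\bar b)$, a profinite group. As $\GL_n(\ZZ)_B$ is a smooth separated group scheme over $B$, the pointed set $\check{\mathrm H}^1(B,\GL_n(\ZZ)_B)$ classifies $\GL_n(\ZZ)_B$-torsors over $B$; and since $\GL_n(\ZZ)_B$ is a constant (in particular locally constant) sheaf of groups, I would invoke the \'etale analogue of Galois theory to identify these torsors with the conjugacy classes of continuous homomorphisms $\rho\colon\pi_1\to\GL_n(\ZZ)$, where $\GL_n(\ZZ)$ carries the discrete topology and $\rho$ records the monodromy on a geometric fibre of the torsor. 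The crucial observation is then that the image of any such $\rho$ is a \emph{finite} subgroup $H\le\GL_n(\ZZ)$, being a compact subset of a discrete group; moreover $\ker\rho$ is then open, so the torsor becomes trivial over the corresponding finite \'etale Galois cover of $B$, and is therefore induced from an $H_B$-torsor along the inclusion $H_B\hookrightarrow\GL_n(\ZZ)_B$. In other words,
\[
	\check{\mathrm H}^1(B,\GL_n(\ZZ)_B)=\bigcup_{H}\operatorname{im}\!\left(\check{\mathrm H}^1(B,H_B)\longrightarrow\check{\mathrm H}^1(B,\GL_n(\ZZ)_B)\right),
\]
where $H$ ranges over the finite subgroups of $\GL_n(\ZZ)$ and each map is induced by the inclusion $H_B\hookrightarrow\GL_n(\ZZ)_B$.

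To finish, I would argue that this is a finite union of finite sets. First, if $H'=\sigma H\sigma^{-1}$ for some $\sigma\in\GL_n(\ZZ)$, then conjugation by $\sigma$ is an inner automorphism of $\GL_n(\ZZ)_B$ carrying $H_B$ isomorphically onto $H'_B$; since inner automorphisms induce the identity on $\check{\mathrm H}^1$, the images of $\check{\mathrm H}^1(B,H_B)$ and $\check{\mathrm H}^1(B,H'_B)$ in $\check{\mathrm H}^1(B,\GL_n(\ZZ)_B)$ coincide, so in the displayed union one may let $H$ range only over a set of representatives of the conjugacy classes of finite subgroups of $\GL_n(\ZZ)$. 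By the Jordan--Zassenhaus theorem there are only finitely many such classes (Minkowski bounds the order of a finite subgroup of $\GL_n(\ZZ)$ solely in terms of $n$, and there are finitely many conjugacy classes of subgroups of bounded order). Second, for each such $H$ the group scheme $H_B$ is finite \'etale over $B$, in particular smooth, affine and of finite type, so $\check{\mathrm H}^1(B,H_B)$ is finite by Lemma~\ref{thm: GMB onedim}. Hence $\check{\mathrm H}^1(B,\GL_n(\ZZ)_B)$ is finite.

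The step I expect to require the most care is the passage to finite monodromy: the identification of $\check{\mathrm H}^1(B,\GL_n(\ZZ)_B)$ with conjugacy classes of continuous homomorphisms $\pi_1\to\GL_n(\ZZ)$, and the resulting fact that every $\GL_n(\ZZ)_B$-torsor is induced from a finite subgroup. This is precisely where the failure of $\GL_n(\ZZ)_B$ to be of finite type must be circumvented; once it is, everything is a formal combination of Jordan--Zassenhaus with Lemma~\ref{thm: GMB onedim}. A more hands-on variant would instead trivialise a given torsor over an explicit finite \'etale cover by analysing its defining \v{C}ech cocycle, but the fundamental-group formulation seems the cleanest way to make the finiteness of the monodromy transparent.
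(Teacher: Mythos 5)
Your argument is correct and is in substance the same as the paper's: the paper simply cites Mazur's lemma on ``decent'' groups ([Maz93, \S 16]), whose proof is exactly your reduction --- a torsor under the constant group scheme $\GL_n(\ZZ)_B$ has finite monodromy, hence is induced from one of the finitely many conjugacy classes of finite subgroups of $\GL_n(\ZZ)$ --- combined with the finiteness of $\check{\mathrm H}^1(B,H_B)$ for $H$ finite. The only cosmetic difference is that you obtain this last finiteness from Lemma~\ref{thm: GMB onedim} where Mazur uses Hermite--Minkowski directly; the two are interchangeable here.
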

\begin{proof}
As $\GL_n(\ZZ)$ is an arithmetic group,  it is finitely presented and  contains only finitely many finite subgroups, up to conjugacy; see \cite[Thm.~4.2]{PR94} and \cite[Thm.~4.3]{PR94}.
It is therefore ``decent'' in the sense of \cite[\S 16]{Maz93}, hence the result follows from \cite[Lem.~(a), \S 16]{Maz93} (this being a simple application
of Hermite-Minkowski).
\end{proof}

\subsection{Reductive algebraic groups}\label{sec:reductive}
We assume that the reader is familiar with the basic theory of reductive group schemes,
as found in  \cite{Conrad} or \cite{SGA3}.
%In this note, all reductive groups will be \emph{connected}.

\begin{definition}\label{def:reductive} Let $B$ be a scheme.
	A group scheme $G$ over $B$ is \emph{reductive} 
	if the geometric  fibres  of $G\to B$ are smooth connected affine with trivial unipotent radical.
	%A reductive group scheme $G$ over $B$ is \emph{split} 	if it contains a split maximal torus over $B$. %Should we mention algebraic spaces here? In fact, a reductive group scheme over B can also be defined as a morphism of algebraic spaces $G\to B$ such that the geometric fibres are reductive groups.
\end{definition}

If $B$ is integral, we define the \emph{rank} of $G$ to be the dimension of the maximal torus of the generic fibre
(note that we do not consider only  split maximal tori).

\subsection{Tori and semi-simple groups}
In order to prove Theorem \ref{thm:reductive}, we first handle the cases of algebraic
tori and semi-simple groups.

\begin{proposition} \label{prop:tori}
	Let $n \in \NN$ and let $B \subset \Spec \OO_K$ be a dense open subscheme.
	Then the set of $B$-isomorphism classes of $n$-dimensional tori over $B$ is finite.
\end{proposition}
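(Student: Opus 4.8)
The plan is to exhibit every $n$-dimensional torus over $B$ as a twist, for the étale topology, of the split torus $\Gm^n$, and then to quote Lemma~\ref{lem: jz}.

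First I would recall that $B$, being a dense open subscheme of the Dedekind scheme $\Spec \OO_K$, is regular and connected, in particular normal and Noetherian. Over such a base every torus is \emph{isotrivial}: it becomes isomorphic to a split torus after a finite étale surjective base change (see \cite[Exp.~X]{SGA3} and \cite{Conrad}). Hence every $n$-dimensional torus $T$ over $B$ is a twist of $\Gm^n$ for the étale topology on $B$.

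Next I would invoke the description of $\Aut_B(\Gm^n)$. Automorphisms of the split torus $\Gm^n$ as a group scheme over $B$ are the same as automorphisms of its character lattice $\ZZ^n$, so the automorphism functor of $\Gm^n$ is represented by the constant group scheme $\GL_n(\ZZ)_B$; this is smooth and separated over $B$, being a disjoint union of copies of $B$. By the formalism of twists recalled in \S\ref{section: GMB}, the pointed set $\check{\mathrm H}^1(B,\GL_n(\ZZ)_B)$ classifies the twists of $\Gm^n$, hence is in bijection with the set of $B$-isomorphism classes of $n$-dimensional tori over $B$. Finiteness then follows from Lemma~\ref{lem: jz}.

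The one point that deserves care is that $\GL_n(\ZZ)_B$ is not of finite type over $B$, so the finiteness of $\check{\mathrm H}^1(B,\GL_n(\ZZ)_B)$ does \emph{not} follow from the Gille--Moret-Bailly result (Lemma~\ref{thm: GMB onedim}); this is precisely why Lemma~\ref{lem: jz}, built on Hermite--Minkowski, was isolated above. Otherwise the argument is formal, once one knows that tori over the normal base $B$ are isotrivial and that the automorphism group scheme of a split torus is constant.
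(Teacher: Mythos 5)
Your proposal is correct and follows essentially the same route as the paper: realise every $n$-dimensional torus over $B$ as an \'etale twist of $\Gm^n$, identify $\Aut_B(\Gm^n)$ with the constant group scheme $\GL_n(\ZZ)_B$, and conclude by the finiteness of $\check{\mathrm H}^1(B,\GL_n(\ZZ)_B)$ from Lemma~\ref{lem: jz}. The only cosmetic difference is that the paper takes ``twist of $\Gm^n$'' as the definition of a torus rather than deducing it from isotriviality, and your closing remark correctly identifies why Lemma~\ref{lem: jz} rather than Lemma~\ref{thm: GMB onedim} is the needed input.
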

\begin{proof}
	Let $T$ be an $n$-dimensional torus over $B$. Then by definition, $T$
	is a twist of $\mathbb G_{m,B}^n$. Such twists are classified by $\check{\mathrm H}^1(B, \Aut_B \mathbb G_{m,B}^n)$.
	However $\Aut_B \mathbb G_{m,B}^n \cong \GL_n(\ZZ)_B$, hence the result follows from  Lemma \ref{lem: jz}.
\end{proof}

We next treat the case of semi-simple groups which are twists of a fixed \emph{split}
semi-simple group.

\begin{lemma}\label{lem: twisting lemma} Let $B \subset \Spec \OO_K$ be a dense open subscheme and
	let $G_K$ be a split semi-simple group over $K$. Then the set of $B$-isomorphism classes of semi-simple
	group schemes $G^\prime$ over $B$ such that $G^\prime_K$ is a twist of $G_K$ is finite.
\end{lemma}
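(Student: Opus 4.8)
The strategy is to reduce the classification of these twisted forms to a finite collection of étale cohomology sets, each of which is finite by Lemma~\ref{thm: GMB onedim}. Let $G_K$ be split semi-simple over $K$. By general theory of reductive group schemes (e.g. \cite[Exp.~XXII, XXIII]{SGA3} or \cite{Conrad}), a split semi-simple group over $K$ extends to a (unique up to isomorphism) split semi-simple group scheme $G_0$ over $B$ — indeed over $\Spec\ZZ$ — determined by its root datum. A semi-simple group scheme $G'$ over $B$ with $G'_K$ a twist of $G_K$ is, in particular, étale-locally on $K$ isomorphic to $G_0 \times_B K$; the first and main point is to upgrade this to: $G'$ is a twist of $G_0$ \emph{as a group scheme over $B$}, i.e. $G'$ becomes isomorphic to $G_0$ étale-locally on $B$. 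This follows because two semi-simple group schemes over a connected base with the same geometric root datum become isomorphic after an étale cover (the scheme of isomorphisms between them is an $\underline{\Aut}_B(G_0)$-torsor, and $\underline{\Aut}_B(G_0)$ is smooth affine), and the geometric root datum is locally constant in the fibres, hence constant on $B$ connected and equal to that of $G_K$ by the hypothesis.

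Granting this, the twists of $G_0$ over $B$ are classified by $\check{\mathrm{H}}^1(B, \underline{\Aut}_B G_0)$. The automorphism group scheme of a split semi-simple group scheme sits in an exact sequence
\[
1 \longrightarrow G_0^{\mathrm{ad}} \longrightarrow \underline{\Aut}_B G_0 \longrightarrow \underline{\Out}_B G_0 \longrightarrow 1,
\]
where $G_0^{\mathrm{ad}}$ is the adjoint group (acting by inner automorphisms) and $\underline{\Out}_B G_0$ is the finite constant group scheme attached to the automorphism group of the Dynkin diagram (see \cite[Exp.~XXIV]{SGA3}). All three terms are smooth affine group schemes of finite type over $B$: $G_0^{\mathrm{ad}}$ is semi-simple hence smooth affine, and a finite constant group scheme is smooth affine. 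Now apply Lemma~\ref{thm: GMB onedim} to each: $\check{\mathrm{H}}^1(B, G_0^{\mathrm{ad}})$ and $\check{\mathrm{H}}^1(B,\underline{\Out}_B G_0)$ are both finite. A standard twisting argument in non-abelian cohomology then shows $\check{\mathrm{H}}^1(B, \underline{\Aut}_B G_0)$ is finite: the map to $\check{\mathrm{H}}^1(B,\underline{\Out}_B G_0)$ has finite image, and each fibre is a quotient of $\check{\mathrm{H}}^1(B, {}^{c}G_0^{\mathrm{ad}})$ for a cocycle $c$ representing that image class, where ${}^{c}G_0^{\mathrm{ad}}$ is the corresponding inner form — still semi-simple, hence smooth affine of finite type — so again finite by Lemma~\ref{thm: GMB onedim}. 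Since there are finitely many relevant cocycle classes $c$ (the image being finite) and finitely many twists in each fibre, the total set is finite.

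The main obstacle is the first step: verifying that a $B$-semi-simple group scheme whose \emph{generic fibre} is an étale-local form of $G_K$ is automatically an étale-local form of $G_0$ over \emph{all} of $B$. One must argue that the root datum of the geometric fibres is constant across $B$ (locally constant by \cite[Exp.~XXII, Prop.~2.8]{SGA3} or the openness results there, then constant since $B$ is connected) and that, having fixed the root datum, the functor $\underline{\mathrm{Isom}}_B(G_0, G')$ of isomorphisms is representable by a scheme that is an $\underline{\Aut}_B(G_0)$-torsor which is smooth (hence has sections étale-locally) — this is precisely the isotriviality of reductive group schemes proved in \cite[Exp.~XXIV, Cor.~1.8]{SGA3} or \cite[Thm.~3.1.3 ff., Exp.~XXII]{Conrad}. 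Once this is in place the rest is the bookkeeping of non-abelian $\check{\mathrm{H}}^1$ sketched above, all of whose finiteness inputs come from Lemma~\ref{thm: GMB onedim}.
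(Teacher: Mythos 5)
Your proposal is correct and its skeleton is the same as the paper's: spread the split group out to a semi-simple group scheme $G_0$ over $B$ via the existence theorem, upgrade the generic-fibre twisting to the statement that $G'$ is an \'etale-local form of $G_0$ over all of $B$ (constancy of the root datum over the connected base plus isotriviality / the isomorphism theorem --- the paper does this via \cite[Cor.~XXII.2.3]{SGA3} and \cite[Thm.~XX.4.1]{SGA3}), and then bound $\check{\mathrm{H}}^1(B,\underline{\Aut}_B G_0)$. The one place you diverge is the last step: the paper simply quotes \cite[Thm.~XXIV.1.3]{SGA3} to say that $\underline{\Aut}_B G_0$ is itself smooth affine of finite type over $B$ and applies Lemma~\ref{thm: GMB onedim} once, whereas you run a d\'evissage through $1 \to G_0^{\mathrm{ad}} \to \underline{\Aut}_B G_0 \to \underline{\Out}_B G_0 \to 1$ with a twisting argument on the fibres. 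That argument does work (each nonempty fibre is a quotient of $\check{\mathrm{H}}^1$ of a twisted form of $G_0^{\mathrm{ad}}$, which is still smooth affine of finite type by \'etale descent, so Lemma~\ref{thm: GMB onedim} applies to it), but it is strictly more work than needed --- indeed you yourself assert that all three terms of the sequence, including the middle one, are smooth affine of finite type, at which point a single application of Lemma~\ref{thm: GMB onedim} to the middle term finishes the proof. Two small points of care if you keep the d\'evissage: the group you twist $G_0^{\mathrm{ad}}$ by should be (the image under a splitting of) a cocycle lifting the class in $\underline{\Out}_B G_0$, and the resulting form is an \emph{outer} (quasi-split) form, not an ``inner form'' as you wrote; neither affects the finiteness conclusion.
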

\begin{proof}
	Note that as $G_K$ is split, by the existence theorem \cite[Cor.~XXV.1.3]{SGA3} there exists a smooth semi-simple group scheme $G$ over $B$ such that $G_K$ is isomorphic to $G\times_B K$.  Let $G'$ be a semi-simple group scheme over $B$ whose generic fibre
	is a twist of $G_K$ over $K$. 

	Locally for the \'etale topology on $B$, by \cite[Cor.~ XXII.2.3]{SGA3} we see that $G$ and $G^\prime$ are $B$-split and have isomorphic generic fibres over $K$. Therefore, by the isomorphism theorem \cite[Thm.~XX.4.1]{SGA3},  
	$G'$ is a twist of $G$, hence corresponds to a some element of $\check{\mathrm H}^1(B, \Aut_B G)$.
	However the scheme $\Aut_B G$ is smooth affine of finite type over $B$
	\cite[Thm.~XXIV.1.3]{SGA3}, hence 
	the result follows from Lemma \ref{thm: GMB onedim}.
\end{proof}

%Prop 6.2.11 in Conrad  . (In fact, locally for the \'etale topology, $G$ and $G^\prime$ are $B$-split,  generically isomorphic and thus have the same root datum. Now apply \cite{SGA3}.)
%In fact, locally for the \'etale topology, $G$ and $G^\prime$ are split,  generically isomorphi and thus have the same root datum. Thus, they are isomorphic (locally for the \'etale topology).  In particular,   they have the same root datum  \cite[Prop.~XXII.2.8]{SGA3}.

To continue, we need to know that there are only
finitely many \emph{split} semi-simple algebraic groups of fixed dimension over 
a given field. This follows from the classification of semi-simple groups,
as we now explain. We also
prove the analogous result for semi-simple algebraic groups of fixed rank,
as it will be used later on when we handle flag varieties.

\begin{lemma}\label{lem:ss_split}
	Let $n \in \NN$ and let $K$ be a field. 
	Then the set of $K$-isomorphism classes of split semi-simple group schemes over
	$K$ of dimension $n$ (or rank~$n$) is finite.
\end{lemma}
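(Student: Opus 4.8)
The plan is to reduce to the structure theory: split semi-simple group schemes over a field are classified (up to isomorphism) by their root data, and moreover a split semi-simple group scheme over a general base is already determined by its root datum via the existence and isomorphism theorems (\cite[Cor.~XXV.1.3]{SGA3}, \cite[Thm.~XXIII.4.1]{SGA3}). So the lemma will follow once I show that there are only finitely many semi-simple root data $(X, \Phi, X^\vee, \Phi^\vee)$ with a fixed value of either $\dim G = \#\Phi + \rank$ or $\rank G = \rank X$. Recall that a root datum is semi-simple precisely when $\Phi$ spans $X \otimes \QQ$ (equivalently $\Phi^\vee$ spans $X^\vee \otimes \QQ$), so in the semi-simple case $\rank X$ equals the rank of the root system $\Phi$.

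First I would fix the rank $r$ and bound the combinatorial data. A reduced root system of rank $r$ has at most finitely many isomorphism types — this is the Killing–Cartan classification, but one does not even need the full classification here: the number of reduced root systems of a given rank is finite because such a system is determined by its Cartan matrix, an $r \times r$ integer matrix whose off-diagonal entries lie in $\{0,-1,-2,-3\}$ with the positive-definiteness/crystallographic constraints, hence only finitely many possibilities. Given the root system $\Phi$ (equivalently the coroot system $\Phi^\vee$), the remaining freedom in the root datum is the choice of the lattice $X$ squeezed between the root lattice $Q = \ZZ\Phi$ and the weight lattice $P$; since $P/Q$ is the fundamental group, a finite abelian group depending only on $\Phi$, there are only finitely many such intermediate lattices $Q \subseteq X \subseteq P$. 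Thus for each fixed rank there are finitely many semi-simple root data, which together with the classification theorems gives the rank-$n$ case.

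For the dimension-$n$ case I observe that $\dim G = n$ forces $\rank G \leq n$ (the rank is the dimension of a maximal torus, a subgroup of $G$), so it suffices to take the union over $r = 0, 1, \dots, n$ of the finite sets produced above — within each rank $r$ there are finitely many root data, a fortiori finitely many of dimension exactly $n$. I expect the only real point requiring care is making the passage between "root datum" and "split semi-simple group scheme over $K$" rigorous as a bijection on isomorphism classes; this is exactly the content of the existence theorem \cite[Cor.~XXV.1.3]{SGA3} and the isomorphism theorem \cite[Thm.~XXIII.4.1]{SGA3} (or \cite[Thm.~XX.4.1]{SGA3} for the adjoint/simply-connected reductions), so no new argument is needed — one simply cites them. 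Everything else is the elementary finiteness of the combinatorics of root data of bounded rank.
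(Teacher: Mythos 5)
Your proposal is correct, but it organises the classification differently from the paper. The paper first treats split \emph{simply connected} groups by writing them as products of simple factors and invoking the Killing--Cartan list together with the explicit dimension formulas for the four infinite series (so that fixed dimension or rank leaves only finitely many diagrams), and then deduces the general case from the exact sequence $0 \to \pi_1(G) \to \widetilde{G} \to G \to 0$ and the finiteness of the centre of $\widetilde{G}$. You instead parametrise split semi-simple groups directly by their root data via the existence and isomorphism theorems, bound the number of reduced root systems of given rank by the Cartan-matrix argument, and account for the remaining freedom by counting intermediate lattices $Q \subseteq X \subseteq P$, which is finite since $P/Q$ is a finite group; your choice of $X$ plays exactly the role that the choice of finite central subgroup of $\widetilde{G}$ plays in the paper. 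Your route has the small advantage of not needing the explicit dimension formulas: you dispose of the fixed-dimension case by the trivial bound $\rank G \leq \dim G$ and reduce to the fixed-rank case, whereas the paper reads the dimensions off the classification. Both arguments rest on the same underlying classification theory from SGA~3, and both are complete; the only thing to tidy in yours is the citation for the isomorphism theorem (Exp.~XXIII--XXV of SGA~3, matching the references the paper itself uses), which is a bookkeeping matter rather than a gap.
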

\begin{proof}
	\emph{The simply connected case.}
	Any split simply connected semi-simple algebraic group $G$ may be uniquely
	written as a finite direct product of \emph{simple} split simply connected 
	semi-simple algebraic groups over $K$. It therefore suffices
	to consider the case where $G$ is simple. 
	Such groups are classified by irreducible Dynkin diagrams. There
	are five exceptional diagrams, plus four infinite series $A_{\ell},B_{\ell},C_{\ell}$ and $D_{\ell}$,
	whose corresponding algebraic groups have dimensions 
	$\ell^2 +2\ell, \ell(2\ell+1) , \ell(2\ell+1)$ and $\ell(2\ell -1)$ 
	and ranks $\ell$, respectively 
	(these are the dimensions and ranks of $\SL_{\ell + 1}, \SO_{2\ell + 1},
	\SP_{2\ell}$ and $\SO_{2\ell}$, respectively).
	From this classification, it is clear that there are only finitely many
	simple split simply connected semi-simple algebraic groups	of fixed  dimension
	(or fixed rank).
	
	\emph{The general case.}
	Any semi-simple algebraic group $G$ fits into an exact sequence of algebraic groups
	$$0 \to \pi_1(G) \to \widetilde{G} \to G \to 0.$$
	Here $\widetilde{G}$ denotes the universal cover of $G$
	and $\pi_1(G)$ the algebraic fundamental group of $G$,
	which is finite and contained in the centre of $\widetilde G$.
	However the centre of a semi-simple algebraic group is finite,
	and so the result follows from the simply connected case.
\end{proof}

\begin{proposition}\label{prop:ss}
	Let $n \in \NN$ and let $B \subset \Spec \OO_K$ be a dense open subscheme.
	Then the set of $B$-isomorphism classes of semi-simple group schemes
	over $B$ of dimension $n$ (or rank $n$) is finite.
\end{proposition}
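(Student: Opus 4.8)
The plan is to descend to the generic fibre and then invoke the two finiteness statements just established, namely Lemmas~\ref{lem: twisting lemma} and~\ref{lem:ss_split}. Let $G'$ be a semi-simple group scheme over $B$ of dimension $n$ (resp.\ of rank $n$). Since $B$ is connected, $G' \to B$ has constant relative dimension, so the generic fibre $G'_K := G' \times_B \Spec K$ is a semi-simple algebraic group over $K$ of dimension $n$; and if instead $G'$ has rank $n$, then $G'_K$ has rank $n$ by the very definition of rank in \S\ref{sec:reductive}. Thus it suffices to show that the generic fibre lies in a finite list of $K$-groups, and that each such $K$-group admits only finitely many semi-simple $B$-models up to $B$-isomorphism.

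For the first point I would use the standard structure theory of semi-simple groups over a field (see \cite{SGA3}): every semi-simple algebraic group over $K$ splits over a finite separable extension of $K$, being an inner twist of its quasi-split inner form, which is in turn a (possibly outer) twist of the split semi-simple $K$-group $G_{0,K}$ with the same root datum. In particular $G'_K$ is a \emph{twist} of the split semi-simple group $G_{0,K}$ in the sense of Lemma~\ref{lem: twisting lemma}, and $G_{0,K}$ has the same dimension (resp.\ rank) $n$ as $G'_K$. By Lemma~\ref{lem:ss_split}, as $G'$ varies the split form $G_{0,K}$ ranges over a \emph{fixed} finite set $\{G_{1,K}, \dots, G_{r,K}\}$ of split semi-simple $K$-groups of dimension (resp.\ rank) $n$, so $G'_K$ is a twist of $G_{i,K}$ for some $i \in \{1,\dots,r\}$.

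Finally I would apply Lemma~\ref{lem: twisting lemma}: for each fixed $i$, the set of $B$-isomorphism classes of semi-simple group schemes $G'$ over $B$ with $G'_K$ a twist of $G_{i,K}$ is finite. Since every semi-simple group scheme over $B$ of dimension (resp.\ rank) $n$ falls into one of these $r$ finite families, the full set of $B$-isomorphism classes is finite, which is the assertion.

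I do not expect a serious obstacle: the substantive input (finiteness of \v{C}ech $\mathrm H^1$, hence of \'etale twists, via Gille--Moret-Bailly; and finiteness of split semi-simple types via the Dynkin classification) is already packaged in Lemmas~\ref{thm: GMB onedim}, \ref{lem: twisting lemma} and~\ref{lem:ss_split}. The only points needing (routine) care are that a semi-simple group scheme over the integral base $B$ has semi-simple generic fibre of the same dimension and rank, that an arbitrary semi-simple group over a field is a twist of a split one, and that $B$-isomorphism is compatible with passage to generic fibres, so that bounding the $B$-models of the finitely many possible generic types really does bound all $B$-isomorphism classes.
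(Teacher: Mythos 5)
Your proposal is correct and is precisely the argument the paper intends: the paper's proof of Proposition~\ref{prop:ss} simply says it "follows immediately from Lemma~\ref{lem: twisting lemma} and Lemma~\ref{lem:ss_split}", and your write-up supplies exactly the intended details (pass to the generic fibre, note it is a twist of a split group of the same dimension or rank, bound the split forms by Lemma~\ref{lem:ss_split}, and bound the $B$-models of each by Lemma~\ref{lem: twisting lemma}).
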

\begin{proof}
	This follows immediately from Lemma \ref{lem: twisting lemma}
	and Lemma \ref{lem:ss_split}.
\end{proof}

\subsection{Proof of Theorem \ref{thm:reductive}.} 
We now come to the proof of Theorem \ref{thm:reductive}.
We begin by handling torsors under a fixed reductive group scheme.

\begin{lemma}\label{lem: torsors}
	Let $B \subset \Spec \OO_K$ be a dense open subscheme and let $G$ be a reductive group scheme over $B$.
	Then the set of $B$-isomorphism classes of $G$-torsors over $B$ is finite. 
\end{lemma}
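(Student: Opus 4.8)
The key point is that a reductive group scheme $G$ over $B$ is a twist of a split reductive group scheme, so $G$-torsors are controlled by a cohomology set with coefficients in $\Aut_B G$, which is an affine group scheme of finite type, and then Lemma~\ref{thm: GMB onedim} applies. The subtlety is that this directly computes twists of the trivial torsor $G$ itself, but $\check{\mathrm H}^1(B,G)$ is exactly the set classifying $G$-torsors, so in fact the statement follows even more directly: Lemma~\ref{thm: GMB onedim} requires $G$ to be smooth affine of finite type over $B$, and this is part of the definition (together with the standard fact that a reductive group scheme is affine and of finite type over its base, see \cite{Conrad}). So the plan is essentially to quote the right structural facts about reductive group schemes and then invoke the Gille--Moret-Bailly finiteness result.

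\begin{proof}
By definition $G$ is a reductive group scheme over $B$; such a group scheme is affine, smooth and of finite type over $B$ (see e.g.\ \cite[Def.~3.1.1 and Thm.~3.1.7]{Conrad}, or \cite[Exp.~XIX]{SGA3}). Since $\check{\mathrm H}^1(B,G)$ classifies $G$-torsors over $B$ (by \cite[Thm.~III.4.3]{MilneEC}, \cite[Prop.~III.4.6]{MilneEC} and \cite[Thm.~XI.3.1]{Ray70}, as recalled above), the result follows at once from Lemma \ref{thm: GMB onedim}.
\end{proof}

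The main obstacle in the broader argument lies not here but in the next step---combining this with Propositions~\ref{prop:tori} and~\ref{prop:ss}---where one must show that an arbitrary reductive group scheme is, up to finitely many possibilities, determined by its torus part and its semi-simple part; that is, one must control the extension data gluing a torus to a semi-simple group, and bound the derived group and the radical simultaneously. For the present lemma, though, the content is purely that reductive group schemes are smooth affine of finite type, so that the finiteness theorem of Gille and Moret-Bailly is directly applicable.
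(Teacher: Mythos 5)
Your proof is correct and follows exactly the paper's argument: $G$-torsors are classified by $\check{\mathrm H}^1(B,G)$, and since a reductive group scheme is smooth affine of finite type over $B$, Lemma~\ref{thm: GMB onedim} applies directly. The extra remarks in your plan about twists of split groups are unnecessary for this lemma, as you yourself note.
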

\begin{proof}
	As explained in \S \ref{section: GMB}, such torsors are classified by $\check{\mathrm H}^1(B,G)$.
	This set is finite by Lemma \ref{thm: GMB onedim}.
\end{proof}

Therefore to prove Theorem \ref{thm:reductive}, it suffices to show that the set of $B$-isomorphism classes of reductive group schemes
over $B$ of dimension $n$ is finite. To do so, let $G$ be a reductive group scheme over $B$ and let
$\mathscr{R}(G)$ and $\mathscr{D}(G)$ denote the radical and the derived subgroup scheme of $G$,
respectively (see \cite[Exp.~XXII]{SGA3}).
By \cite[Exp.~XXII, 6.2.3]{SGA3}, we have the following short exact sequence 
$$0 \to \mathscr{R}(G) \cap \mathscr{D}(G) \to \mathscr{R}(G) \times_B \mathscr{D}(G) \to G \to 0$$
of group schemes over $B$. Note that $\mathscr{R}(G) \cap \mathscr{D}(G)$ is
a central finite flat subgroup scheme of $G$.
As $\mathscr{R}(G)$ is a torus and $\mathscr{D}(G)$ is semi-simple, it follows
from Proposition \ref{prop:tori} and Proposition \ref{prop:ss} that, as
$G$ runs over all reductive group schemes over $B$ of fixed dimension, there are only
finitely many choices for $\mathscr{R}(G)$ and $\mathscr{D}(G)$.
Thus, to prove Theorem \ref{thm:reductive}, it suffices to show the following.

\begin{lemma}
	Let $T$ be an algebraic torus  and let $D$ be a semi-simple algebraic group
	scheme, both over $B$. Then there are only finitely many
	central finite flat subgroups $F \subset T \times_B D$ such that the
	projections
	$$F \to T, \quad F \to D,$$
	are closed immersions.
\end{lemma}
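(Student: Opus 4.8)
The plan is to reduce the statement to a finiteness assertion about finite flat group schemes over $B$ and then about finite abelian groups. First, a central finite flat subgroup $F \subset T \times_B D$ that projects via closed immersions to $T$ and to $D$ is determined by its image $F_T \subset T$, its image $F_D \subset D$, and the induced isomorphism $\varphi\colon F_T \xrightarrow{\ \sim\ } F_D$ (this uses that $F \to F_T$ and $F \to F_D$ are isomorphisms, being closed immersions that are also faithfully flat of the same rank). Thus I would count the data $(F_T, F_D, \varphi)$ separately. The subgroup $F_T$ is a finite flat central subgroup scheme of the torus $T$, and since it is central and finite flat it is killed by some integer $m$, hence contained in the $m$-torsion subscheme $T[m]$; and similarly $F_D \subset Z(D)[m]$ where $Z(D)$ is the (finite, by semi-simplicity) centre of $D$. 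Because $T[m]$ and $Z(D)[m]$ are finite flat group schemes over $B$ of bounded rank, there should be only finitely many choices for $m$ that can occur and only finitely many closed subgroup schemes of each — but this last point needs care, as a fixed finite flat group scheme can a priori have infinitely many closed subgroup schemes, so I would instead bound things as follows.

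The cleaner route is to bound $m$ uniformly. The torsion subgroup scheme $T[m]$ has order $m^{\dim T}$, and $F_T \subset T[m]$ forces $|F| = |F_T|$ to divide $m^{\dim T}$; but also $|F_D|$ divides $|Z(D)|$, which is a fixed integer $N$ depending only on $D$. Since $|F_T| = |F_D|$, we get $|F| \mid N$, so $F$ is killed by $N$ and hence $F_T \subset T[N]$, $F_D \subset Z(D)[N]$ — with $N$ now fixed. So it remains to show: a fixed finite flat commutative group scheme $H$ over $B$ (here $T[N]$ or $Z(D)[N]$) has only finitely many closed subgroup schemes, and a fixed pair of such has only finitely many isomorphisms between them. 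For the subgroup count, since $B = \Spec \OO_K[S^{-1}]$ is a one-dimensional regular scheme, a closed subgroup scheme of the finite flat $H/B$ is itself finite flat over $B$ (flatness over a Dedekind domain is automatic once it is $B$-flat at the generic point, and closed subschemes of a finite flat scheme over the generic point are finite in number since the generic fibre is a finite scheme over the field $K$); the subgroup scheme over $B$ is then the schematic closure of its generic fibre, so it is determined by that generic fibre. Hence there are only finitely many closed subgroup schemes of $H$. The same argument handles $\varphi$: an isomorphism $F_T \to F_D$ of finite flat $B$-group schemes is determined by its generic fibre (by density of the generic point and separatedness), and there are only finitely many homomorphisms between the two finite $K$-group schemes $F_{T,K}$ and $F_{D,K}$.

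Assembling: there are finitely many choices of $F_T$ (as a closed subgroup scheme of $T[N]$), finitely many choices of $F_D$ (as a closed subgroup scheme of $Z(D)[N]$), and for each such pair finitely many isomorphisms $\varphi\colon F_T \to F_D$; each triple $(F_T, F_D, \varphi)$ gives at most one $F$, namely the image of $(\mathrm{id}, \varphi)\colon F_T \hookrightarrow T \times_B D$. This proves the lemma. The main obstacle I anticipate is the bookkeeping needed to justify that a closed subgroup scheme of a finite flat group scheme over the one-dimensional base $B$ is again finite flat and is recovered as the schematic closure of its generic fibre — i.e.\ getting the reduction to the generic fibre airtight — together with pinning down the uniform bound on the order of $F$; everything after that is a routine finiteness-over-a-field argument.
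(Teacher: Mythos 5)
Your proposal is correct and follows essentially the same route as the paper's (much terser) proof: use the finiteness of the centre of $D$ to bound the order of $F$, then reduce to counting finite flat subgroup schemes of bounded order in $T$ via $T[N]$; your extra bookkeeping of the triple $(F_T,F_D,\varphi)$ and of the isomorphisms $\varphi$ fills in steps the paper leaves implicit. One caveat: your intermediate assertion that \emph{every} closed subgroup scheme of a finite flat group scheme $H$ over $B$ is automatically flat is false --- e.g.\ $(\ZZ/2\ZZ)_B$ has infinitely many non-flat closed subgroup schemes of the form $B \sqcup Z$ with $Z \subsetneq B$ closed --- but this does not damage the argument, since the subgroups $F_T \cong F \cong F_D$ you need to count are flat by hypothesis, and for those the schematic-closure/generic-fibre argument you give is exactly what is needed.
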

\begin{proof}
	As the centre of $D$ is a finite flat group scheme over $B$,
	the degree  of $F$ over $B$ is bounded.
	The result therefore follows from the fact that $T$ contains only finitely many 
	finite flat subgroup schemes of
	bounded order, since $T[n]$ is finite flat for each $n$.  \end{proof}

 This completes the proof of Theorem \ref{thm:reductive}. \qed

%Note that this result is actually stronger than the statement "there are only finitely
%many reductive algebraic groups over $K$ which extend to a group scheme over $B$". 

\section{Flag varieties} \label{sec:flags}
The aim of this section is to prove Theorem \ref{thm:flags}. 

\subsection{Preliminaries}
 We begin with some preliminaries on flag varieties.

\begin{definition} \label{def:flag}
	 A smooth proper variety over a field $K$ is a \textit{(twisted) flag variety over $K$} 
	 if it is a homogeneous space for some reductive algebraic group scheme over $K$, and the stabiliser
	 of each point is smooth.
	 A smooth proper scheme over a scheme  $B$ is a \textit{flag scheme over $B$} if its 
	geometric fibres are flag varieties.
\end{definition}

Note that a flag variety may be a homogeneous space for many reductive groups, e.g.~the projective line $\PP^1$
is a homogeneous space for both $\SL_2$ and $\PGL_2$.
Demazure proved that flag schemes are in fact homogeneous spaces for their automorphism group schemes.  

\begin{lemma}\label{lem: dem} 
	Let $X$ be a flag scheme over a scheme $B$. Then $\Aut_B(X)$ is representable
	by a smooth affine group scheme whose neutral component $\Aut_B (X)^0$ is semi-simple.
	Moreover $X$ is a homogeneous space for $\Aut_B (X)^0$.
\end{lemma}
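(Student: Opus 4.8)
The plan is to reduce to the known results of Demazure on automorphism group schemes of flag schemes, but to do so I must first reconcile Definition \ref{def:flag} (a flag scheme is a smooth proper $B$-scheme whose geometric fibres are homogeneous spaces under a reductive group with smooth stabilisers) with the setting in which Demazure works, namely schemes of the form $G/P$ for $G$ reductive and $P$ a parabolic subgroup scheme. First I would recall that over an algebraically closed field, a smooth projective variety $X$ that is homogeneous under a reductive group $G$ with smooth point-stabilisers is in fact of the form $G/P$ with $P$ parabolic: the stabiliser $P$ of a point is smooth by hypothesis, and $X=G/P$ is proper, which forces $P$ to be parabolic by the standard characterisation of parabolic subgroups (a smooth closed subgroup is parabolic iff the quotient is proper). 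This identifies the geometric fibres of a flag scheme $X\to B$ as generalised flag varieties in the classical sense.

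Next I would invoke Demazure's structure theory. By \cite[Exp.~XXVI]{SGA3}, for a flag scheme $X\to B$, the functor $\Aut_B(X)$ is representable by a smooth affine $B$-group scheme; the key input is that $X\to B$ is a smooth proper morphism whose fibres are (geometrically) flag varieties, so the relative automorphism functor is well behaved. To identify the neutral component, I would use that fibrewise $\Aut(X_{\bar b})^0$ is semisimple (adjoint, in fact): for a flag variety $G/P$ over an algebraically closed field, the connected automorphism group is the adjoint semisimple group $G^{\mathrm{ad}}$ acting faithfully, except that one must be slightly careful about the few low-rank coincidences — but in all cases $\Aut(G/P)^0$ is semisimple. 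Hence $\Aut_B(X)^0\to B$ has semisimple geometric fibres and is smooth affine of finite type, so it is a semisimple $B$-group scheme in the sense of Definition \ref{def:reductive} (connectedness holds by construction, the fibres being connected). Finally, that $X$ is a homogeneous space under $\Aut_B(X)^0$ can be checked on geometric fibres — the action of $G^{\mathrm{ad}}$ on $G/P$ is transitive with smooth stabilisers — and transitivity of a smooth group scheme action, being an fibrewise-surjectivity statement for the action morphism $\Aut_B(X)^0\times_B X\to X\times_B X$, descends from the geometric fibres by fibrewise criteria for smoothness and surjectivity \cite[EGA IV]{}.

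The main obstacle I anticipate is the bookkeeping around the definition of flag scheme versus the hypotheses under which Demazure's results in \cite{SGA3} are literally stated: Demazure typically starts from a reductive $B$-group $G$ and a parabolic $P$, whereas here we are only given the $B$-scheme $X$ abstractly, with a fibrewise homogeneity condition. Bridging this requires knowing that the fibrewise parabolic structure propagates — essentially that $X\to B$ étale-locally looks like $G/P$ — which one gets from the representability and smoothness of $\Aut_B(X)$ together with a descent argument, or by directly citing the relevant statement in \cite[Exp.~XXVI]{SGA3} on automorphism schemes of flag schemes. I expect the cleanest route is to cite Demazure's theorem in the precise form proved there and spend the proof simply verifying that our Definition \ref{def:flag} meets its hypotheses, rather than reproving the representability of $\Aut_B(X)$ from scratch.
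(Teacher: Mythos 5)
Your proposal is correct and takes essentially the same route as the paper, which simply cites Demazure's theorem in the form proved for abstractly given flag schemes; the only discrepancy is the reference: the representability and structure of $\Aut_B(X)$ for an abstract flag scheme is not in \cite[Exp.~XXVI]{SGA3} but is \cite[Prop.~6.4]{Dem77}, which is exactly the statement of the lemma and is what the paper invokes. Your fibrewise reductions (identifying geometric fibres with $G/P$, semi-simplicity of $\Aut(G/P)^0$ up to the low-rank coincidences, and descent of transitivity) are the content packaged inside that citation.
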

\begin{proof}
	This is  an application of \cite[Prop.~6.4]{Dem77}.
\end{proof}

Let $G$ be a  semi-simple group 
scheme over a scheme $B$. Recall that a smooth $B$-subgroup scheme $P \subset G$ is called \emph{parabolic} 
if the quotient $G / P$ is a proper $B$-scheme.
In this case $G/P$ is  obviously a flag scheme. Over an algebraically closed
field every flag variety is of this form (one simply takes $P$ to be
the stabiliser of a rational point), but this does not have to be the
case over more general bases.

\begin{lemma} \label{lem:finitely_many_flags}
	Let $G$ be a semi-simple group scheme over a Noetherian scheme $B$.
	Then the set of $B$-isomorphism classes of flag
	schemes which are a homogeneous space for $G$ is finite.
\end{lemma}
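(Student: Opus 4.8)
The plan is to show that every flag scheme which is a homogeneous space for $G$ is $B$-isomorphic to one of the $B$-schemes $\Par_t(G)$ of parabolic subgroups of $G$ of a fixed type $t$, and that there are only finitely many such types. First I would reduce to the case that $B$ is connected: a Noetherian scheme has finitely many connected components, and giving a flag scheme over $B$ amounts, up to $B$-isomorphism, to giving one over each connected component, so finiteness over each component yields finiteness over $B$. So assume $B$ is connected. Recall from \cite[Exp.~XXVI]{SGA3} that the functor sending a $B$-scheme $T$ to the set of parabolic subgroup schemes of $G_T$ is representable by a smooth projective $B$-scheme $\Par(G)$, which decomposes as a disjoint union $\Par(G)=\coprod_t\Par_t(G)$ indexed by the types $t$ of parabolic subgroups (open and closed subschemes of the Dynkin scheme of $G$); since $B$ is connected and the Dynkin scheme is finite \'etale over $B$, there are only finitely many such $t$. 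Each $\Par_t(G)$ is smooth projective over $B$, is a homogeneous space for $G$ under conjugation, and is itself a flag scheme.

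The key step is to construct, for a given flag scheme $X$ homogeneous for $G$, a $G$-equivariant $B$-morphism $\phi\colon X\to\Par(G)$ sending a point $x$ to its stabiliser $\mathrm{Stab}_G(x)$. To see that $\phi$ is well defined I must check that for every $T\to B$ and every $x\in X(T)$ the subgroup scheme $\mathrm{Stab}_{G_T}(x)\subset G_T$ is parabolic. Because $X$ is a homogeneous space, the orbit map $G_T\to X_T$ attached to $x$ is faithfully flat, and $\mathrm{Stab}_{G_T}(x)$ is its fibre over $x$, hence flat over $T$. Its geometric fibres are the stabilisers of points on the flag varieties $X_{\bar s}$ under the transitive action of $G_{\bar s}$, and these are parabolic subgroups by the classical theory over algebraically closed fields (a closed subgroup with proper quotient contains a Borel), in particular smooth and connected. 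Therefore $\mathrm{Stab}_{G_T}(x)$ is smooth over $T$ with parabolic geometric fibres, so it is a parabolic subgroup scheme of $G_T$ by \cite[Exp.~XXVI]{SGA3}; this makes $\phi$ well defined, it is visibly $G$-equivariant for the conjugation action, and since $B$ is connected its image lies in a single component $\Par_t(G)$.

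It remains to see that $\phi\colon X\to\Par_t(G)$ is an isomorphism. Both $X$ and $\Par_t(G)$ are smooth proper over $B$, in particular flat and of finite presentation over the Noetherian scheme $B$, and on each geometric fibre the map $\phi_{\bar s}\colon G_{\bar s}/P\to\Par_t(G_{\bar s})$, $gP\mapsto gPg^{-1}$, is an isomorphism because parabolic subgroups are self-normalising. By the fibral isomorphism criterion \cite[IV, 17.9.5]{EGA}, $\phi$ is therefore an isomorphism, so $X\cong\Par_t(G)$ for one of the finitely many types $t$, which proves the lemma.

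The step I expect to be the main obstacle is the middle one: establishing that the stabilisers of points on a flag scheme really are \emph{parabolic subgroup schemes} of $G$ in the sense of \cite[Exp.~XXVI]{SGA3} — in particular that they are flat and smooth over the base — and then assembling these into the canonical equivariant morphism $\phi$. Once $\phi$ is in hand, the fibral criterion together with the finiteness of the set of types makes the conclusion formal.
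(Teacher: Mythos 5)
Your proposal is correct and follows essentially the same route as the paper: both construct the $G$-equivariant stabiliser morphism $X \to \Par_G$ into the (smooth projective) scheme of parabolic subgroups and conclude by the finiteness of its components, the paper via Noetherianity of $\Par_G$ and you via the finitely many types indexed by the Dynkin scheme. Your version merely fills in details the paper leaves implicit (flatness and smoothness of stabilisers, and the fibral criterion plus self-normalisation of parabolics to see the map is an isomorphism onto a component).
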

\begin{proof}
%	Note if $P$ and $Q$ are two parabolic subgroups over $K$, then they are conjugate	
%	over $\bar K$ if and only if they are conjugate over $K$ (this is Thm.~12.2 in Gille -
%	RATIONALITY PROPERTIES OF LINEAR ALGEBRAIC GROUPS AND GALOIS COHOMOLOGY,
%	this is also in Borel, Titis - Groupes reductifs). Use this fact to descend from the corresponding result over $\bar K$?
%	Here is another idea.
 
Define the functor $\underline{\Par}_G$ from the category of schemes over $B$ to the category of sets by
$$\underline{\Par}_{G}: S \mapsto \{ \textrm{Parabolic subgroups of } G_S\}.$$
This functor is representable by a smooth projective scheme over $B$, which we denote by $\Par_G$
(see \cite[Cor.~5.29]{Conrad} or \cite[Cor.~XXVI.3.5]{SGA3}). %%%It's SGA3 III
 Note that $G$ acts on $\Par_G$ by conjugation.  In particular, the irreducible components
of $\Par_G$ are flag schemes for $G$. 	Now let $X$ be a flag scheme for $G$. We  define a morphism from $X$
	to $\Par_G$ by
	\begin{align*}
		X(S) &\to \Par_G(S) \\
		x &\mapsto \textrm{stabiliser of } x,
	\end{align*} for any $B$-scheme $S$. As this morphism is $G$-equivariant, it identifies 
	$X$ with an irreducible component of $\Par_G$.  As $\mathrm{Par}_G$ is Noetherian, the result is proved.
%	This gives a well-defined $G$-equivariant morphism of schemes $X \to \Par_G$.
%	The morphism $X\to \Par_G$ is a closed immersion, as it is a closed immersion on geometric points.
%	Since $X$ is irreducible, it embeds into some irreducible component $I$ of $\Par_G$.
%	As this morphism is equivariant and $I$ is a homogeneous
%	space for $G$, we see that $X=I$. Therefore, any flag scheme over $B$
%	which is a homogeneous space for $G$ is equivariantly $B$-isomorphic
%	to an irreducible component of $\mathrm{Par}_G$. 
\end{proof}

One also has control over the dimension of a flag variety in terms of the rank of
its automorphism group.

\begin{lemma} \label{lem:rank}
	Let $X$ be a flag variety over a field $K$.
	Then $$\dim X \geq \rank \Aut(X)^0.$$
\end{lemma}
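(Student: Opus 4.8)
The plan is to reduce to the case of a split semi-simple group and then to a combinatorial statement about parabolic subgroups. First I would replace $K$ by its algebraic closure: base change does not change $\dim X$, and it can only increase $\rank \Aut(X)^0$ (a torus stays a torus, and a possibly non-split maximal torus becomes split, so the maximal-torus dimension is unchanged), hence it suffices to prove the inequality over $\overline{K}$. Over an algebraically closed field, Lemma~\ref{lem: dem} gives that $G := \Aut(X)^0$ is semi-simple and $X$ is a homogeneous space for $G$; moreover, since $X$ has a rational point, $X \cong G/P$ for a parabolic subgroup $P \subset G$. Thus I am reduced to showing $\dim(G/P) \geq \rank G$ for every parabolic $P$ in a semi-simple group $G$ over an algebraically closed field, where $\rank G$ is now the usual rank (dimension of a maximal torus $T \subset G$).

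The next step is to make this concrete via root data. Fix a maximal torus $T \subset P$ with root system $\Phi = \Phi(G,T)$ and a Borel $B \subset P$ determining positive roots $\Phi^+$ and simple roots $\Delta$; the parabolic $P$ containing $B$ corresponds to a subset $I \subseteq \Delta$, with $\mathrm{Lie}(P) = \mathrm{Lie}(T) \oplus \bigoplus_{\alpha \in \Phi^+ \cup \Phi_I^-} \mathfrak{g}_\alpha$, where $\Phi_I$ is the sub-root-system spanned by $I$. Consequently $\dim(G/P)$ equals the number of roots in $\Phi^+ \setminus \Phi_I^+$, i.e. the number of positive roots not in the span of $I$. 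So the claim becomes: the number of positive roots of $\Phi$ whose support meets $\Delta \setminus I$ is at least $|\Delta| = \rank G$. I would prove this by exhibiting, for each simple root $\alpha \in \Delta \setminus I$, a distinct positive root lying outside $\Phi_I^+$; taking $\alpha$ itself works and these are visibly distinct, giving at least $|\Delta \setminus I|$ such roots, but I need $|\Delta|$ of them. To get the missing $|I|$ roots: since $G$ is semi-simple the Dynkin diagram has no isolated structure forcing $I = \Delta$ unless $P = G$ (in which case $X$ is a point and the inequality is trivial: $0 \geq 0$ only if $\rank G = 0$, so actually $P=G$ forces $G$ trivial since $G$ is semi-simple with a parabolic equal to itself only when... — more carefully, if $P = G$ then $G/P$ is a point of dimension $0$, and one must then have $\rank G = 0$; but a parabolic can equal $G$, so I must handle this: Lemma~\ref{lem: dem} says $\Aut(X)^0$ is semi-simple, and if $X$ is a point its automorphism group is trivial, so $\rank = 0$ and the inequality holds). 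For $P \neq G$, the diagram of $G$ is connected after grouping into simple factors, and for each node $\beta \in I$ there is a path in the Dynkin diagram from $\beta$ to some node in $\Delta \setminus I$ (within its connected simple-factor component, which must contain a node of $\Delta\setminus I$ since $P\cap(\text{that factor})$ is proper), and summing the simple roots along such a path yields a positive root whose support contains a node outside $I$, hence lies in $\Phi^+ \setminus \Phi_I^+$. Choosing these paths carefully (e.g. shortest paths, or an inductive leaf-stripping argument on the diagram) produces $|I|$ additional positive roots, distinct from the $\alpha \in \Delta\setminus I$ and from each other, for a total of $|\Delta|$.

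I expect the main obstacle to be precisely this last combinatorial count: producing $|\Delta|$ \emph{distinct} positive roots outside $\Phi_I^+$ in a uniform way. The cleanest argument is likely an induction on the Dynkin diagram: pick a leaf $v$ of the diagram; if $v \notin I$ contribute the simple root $\alpha_v$; in all cases, pass to a smaller configuration and lift the produced roots by adding $\alpha_v$ where needed so that each support still meets $\Delta\setminus I$, checking distinctness is preserved. Alternatively one can invoke a known fact: in an irreducible root system the number of positive roots is at least the number of simple roots plus the number of simple roots minus one, or use that $P$ proper in a simple factor forces that factor to contribute a root with full support reaching outside $I$. I would present the induction, as it avoids case-checking the exceptional types. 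The reduction steps (to $\overline{K}$, to $G/P$, to the root-count) are routine given the lemmas already established; only the diagram combinatorics needs care.

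Regarding the credited acknowledgement to Ben Webster for this lemma, I suspect the intended proof is close to the above root-theoretic count, possibly phrased representation-theoretically (e.g. via weights of the tangent space at the base point as a $P$-representation), but the essential content is the inequality $\#\{\alpha \in \Phi^+ : \mathrm{supp}(\alpha) \not\subseteq I\} \geq |\Delta|$ for proper $I \subsetneq \Delta$ in each simple factor, with the point-case handled separately as above.
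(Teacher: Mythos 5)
Your proposal is correct in outline but takes a genuinely different route from the paper. The paper's proof avoids parabolic subgroups and root combinatorics entirely: after reducing to $K$ algebraically closed, it picks a maximal torus $T \subset G = \Aut(X)^0$, uses Borel's fixed point theorem to produce a $T$-fixed point $x \in X(K)$, and shows that $T$ acts faithfully on the tangent space $\mathrm{Tan}_x(X)$ (if $H$ is the kernel, then $X^H$ is smooth closed with $\mathrm{Tan}_x(X)^H = \mathrm{Tan}_x(X^H)$, so $H$ acting trivially on the tangent space forces $X^H = X$, whence $H=1$ by faithfulness of the $G$-action). Diagonalising a faithful representation of a $d$-dimensional torus requires at least $d$ characters generating a finite-index subgroup of the character lattice, so $\dim X = \dim \mathrm{Tan}_x(X) \geq \dim T = \rank G$. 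This is uniform in type and sidesteps both the identification $X \cong G/P$ and the combinatorial count you correctly flag as the delicate part of your argument.

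There is one genuine gap in your write-up. The intermediate statement you reduce to --- ``$\dim(G/P) \geq \rank G$ for every parabolic $P$ in a semi-simple group $G$'' --- is false as stated: take $G = \SL_n \times \SL_2$ and $P = \SL_n \times B$, so $G/P \cong \PP^1$ has dimension $1$ while $\rank G = n$. What rescues you is that $G = \Aut(X)^0$ acts \emph{faithfully} on $X$, so $P$ cannot contain a simple factor $G_1$ of $G$ (a normal subgroup contained in $P$ acts trivially on $G/P$); this is precisely the unproven assertion ``$P \cap (\text{that factor})$ is proper'' on which your path construction rests, and it is the same faithfulness input that powers the paper's tangent-space argument. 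It must be stated and justified. Granted that, your count does go through: the simple roots in $\Delta \setminus I$, together with, for each $\beta \in I$, the sum of the simple roots along a shortest path from $\beta$ to $\Delta \setminus I$ (a root, since each partial sum pairs strictly negatively with the next simple root), give $|\Delta|$ distinct positive roots outside $\Phi_I$ --- distinctness holds because Dynkin diagrams are forests, so such a root is determined by its support, a path whose unique endpoint in $I$ recovers $\beta$. So your approach can be completed, at the cost of root-system bookkeeping the paper's proof renders unnecessary.
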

\begin{proof}
	We may assume that $K$ is algebraically closed.
	Take $G=\Aut(X)^0$ and let $T \subset G$ be a maximal torus.
	The action of $G$ on $X$ is faithful,
	hence $T$ also acts faithfully on $X$.

	Borel's fixed point theorem \cite[Thm.~II.10.4]{Borel}
	implies that $T$ acts on $X$ with a fixed point $x \in X(K)$.
	We claim that the action of $T$ on the tangent
	space $\mathrm{Tan}_{x}(X)$ of $x$ is faithful. 
	Indeed, let $H = \ker (T \to \GL(\mathrm{Tan}_{x}(X)))$.
	By \cite[Prop.~A.8.10]{CGP}, the fixed locus $X^H$
	is a smooth closed subscheme of $X$ and $\mathrm{Tan}_{x}(X)^H = \mathrm{Tan}_{x}(X^H)$.
	As $H$ acts trivially on $\mathrm{Tan}_{x}(X)$ we find that $\dim X^H = \dim X$, hence $X^H = X$.
	However the action of $T$ on $X$ is faithful, thus $H$ is trivial.
	This proves the claim.
	
	On diagonalising the action of 
	$T$ on $\mathrm{Tan}_{x}(X)$, we therefore obtain
	\[\dim X = \dim  \mathrm{Tan}_{x}(X) \geq \dim T = \rank G, \]
	as required.
	\end{proof}
%(see for instance \cite[\S 3]{Ive72})
For flag varieties, we use the following notion of good reduction.

\begin{definition} \label{def:good_reduction_flag} 
	Let $K$ be a number field and let $S$ be a finite set of finite places of $K$.
	A flag variety $X$  over $K$  has \textit{good reduction outside  $S$} if there exists a flag
	scheme $\mathcal{X}$ over $\OO_K[S^{-1}]$ whose generic fibre is isomorphic to~$X$.
\end{definition}

\subsection{Proof of Theorem \ref{thm:flags}} 

In order to prove Theorem \ref{thm:flags}, it suffices to show the following.

\begin{theorem} \label{thm:flags_sch}
	Let $K$ be a number field, let $n\in \NN$ and let $B \subset \Spec \OO_K$ be a dense
	open subscheme. Then the set of $B$-isomorphism classes of $n$-dimensional
	flag schemes over $B$ is finite.
\end{theorem}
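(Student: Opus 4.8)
The plan is to reduce the problem to the finiteness statements about semi-simple group schemes already established in Section \ref{sec:reductive}, using Demazure's result (Lemma \ref{lem: dem}) that a flag scheme is a homogeneous space for the neutral component of its automorphism group scheme. Concretely, let $\mathcal{X}$ be an $n$-dimensional flag scheme over $B$. By Lemma \ref{lem: dem}, $\Aut_B(\mathcal{X})$ is representable by a smooth affine group scheme, its neutral component $G := \Aut_B(\mathcal{X})^0$ is semi-simple, and $\mathcal{X}$ is a homogeneous space for $G$. So the first step is to bound the dimension of $G$: since $\mathcal{X}$ is a homogeneous space for $G$ and $\dim \mathcal{X} = n$, the fibre dimension of $G$ is bounded in terms of $n$ (a flag variety $G/P$ has dimension equal to $\dim G - \dim P$, and $\dim P$ is at least the rank plus the number of positive roots outside $P$, so in any case $\dim G$ is bounded once $n$ is fixed — more simply, a semi-simple group of rank $r$ has dimension at most $r^2 + \text{const}$, and by Lemma \ref{lem:rank} $\rank G \le \dim \mathcal{X} = n$, so $\dim G$ is bounded purely in terms of $n$). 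Hence $G$ lies in a finite set of $B$-isomorphism classes by Proposition \ref{prop:ss}.

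Next, having fixed one of the finitely many possibilities for $G$, I would invoke Lemma \ref{lem:finitely_many_flags}: for a fixed semi-simple group scheme $G$ over the Noetherian scheme $B$, there are only finitely many $B$-isomorphism classes of flag schemes which are homogeneous spaces for $G$. Combining this with the finitely many choices for $G$, one concludes that there are only finitely many $B$-isomorphism classes of $n$-dimensional flag schemes over $B$, which is exactly the statement. Finally, Theorem \ref{thm:flags} follows from Theorem \ref{thm:flags_sch} by taking $B = \Spec \OO_K[S^{-1}]$ and passing to generic fibres: each flag variety over $K$ with good reduction outside $S$ arises as the generic fibre of some flag scheme over $B$, and $B$-isomorphic flag schemes have $K$-isomorphic generic fibres, so finiteness downstairs is inherited.

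The main obstacle I expect is not in this final assembly — which is short — but in making sure the reduction to a \emph{fixed} $G$ is legitimate: one must know that the bound on $\dim G$ (or $\rank G$) in terms of $n$ is uniform and that the finiteness in Proposition \ref{prop:ss} is therefore applicable. This is precisely why Lemma \ref{lem:rank} was proved (to get $\rank \Aut(\mathcal{X})^0 \le \dim \mathcal{X}$, hence a bound in terms of $n$ alone via the classification of semi-simple types) and why Proposition \ref{prop:ss} was stated in the ``rank $n$'' form as well as the ``dimension $n$'' form. A secondary technical point is to check that $\Aut_B(\mathcal{X})^0$ is well-behaved as $\mathcal{X}$ varies — but this is guaranteed fibrewise and over $B$ by Lemma \ref{lem: dem}. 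With those inputs in hand, the proof is essentially a three-line deduction: bound the rank of the automorphism group by $n$; apply Proposition \ref{prop:ss} to get finitely many $G$; apply Lemma \ref{lem:finitely_many_flags} for each such $G$.
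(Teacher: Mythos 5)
Your proposal is correct and follows the paper's own proof essentially verbatim: bound $\rank \Aut_B(\mathcal{X})^0$ by $n$ via Lemma \ref{lem:rank}, apply the rank form of Proposition \ref{prop:ss} to get finitely many choices for the semi-simple group, then conclude with Lemma \ref{lem:finitely_many_flags}. No gaps.
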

\begin{proof}
	Let $X$ be a flag scheme over $B$ of dimension $n$.
	By Lemma \ref{lem: dem}, we known that $\Aut_B (X)^0$ is a semi-simple group scheme over $B$
	and that  $X$ is a flag scheme for $\mathrm{Aut}_B(X)^0$. 
	Moreover by Lemma \ref{lem:rank}, we have 
	$$\rank \mathrm{Aut}_B(X)^0 \leq n.$$
	Hence by Proposition \ref{prop:ss} there are only finitely many choices for $\mathrm{Aut}_B(X)^0$.
	Moreover, by Lemma \ref{lem:finitely_many_flags}, there are only finitely many flag schemes over $B$
	associated to $\mathrm{Aut}_B(X)^0$. This proves the result.
\end{proof}
This completes the proof of Theorem \ref{thm:flags}. \qed

\section{Unipotent groups} \label{sec:unipotent} 
We now explain how the analogues of our main results do not hold for unipotent
groups.

\subsection{Unipotent group schemes} \label{sec:unipotent_sch}
We first show that the analogue of Corollary \ref{cor:reductive_sch} fails for unipotent groups of dimension at least two. 
Let $p$ be a prime and let $W_p$ be the additive group of Witt vectors
of length $2$ over $\ZZ$. Explicitly, this is a smooth commutative
unipotent scheme over $\ZZ$ with group law
\begin{align*}
	W_p \times W_p &\to W_p \\
	(x,y) \times (x',y') & \mapsto \left(x + x', y + y' + \frac{(x + x')^p - x^p - x'^p}{p}\right).
\end{align*}
For a ring $R$, we have $W_{p}\times R \cong \mathbb G_{a,R}^2$ if and only if $p$ is invertible in $R$.
Hence if $p$ and $q$ are distinct prime numbers, then $W_{p}$ is not isomorphic to $W_{q}$. 
We therefore see that there are infinitely many smooth commutative unipotent group schemes of
dimension $2$ over $\ZZ$, up to isomorphism. %By taking products we see that, for all $n\geq 2$, the set of $B$-isomorphism classes of $n$-dimensional smooth unipotent group schemes over $B$ is finite.

For completeness let us note that the analogue of Corollary \ref{cor:reductive_sch}  does indeed
hold in the one-dimensional case. Namely, let $B$ be a dense open subscheme of $\Spec \OO_K$.
Then by \cite[Cor.~3.8]{WW80}, the set of $B$-isomorphism classes of one-dimensional unipotent groups 
over $B$ is in bijection with  $\Pic(B)$. In particular, this set is finite.  %As $\Pic(B)$ is finite (by the finiteness of the class number), the result follows. 

% Namely,  let $K$ be a number field and let $B \subset \Spec \mathcal{O}_K$ be a non-empty  open subscheme. Then b
\subsection{Good reduction of unipotent groups}
We now show that the analogue
of Corollary \ref{cor:reductive_gr} fails for unipotent groups.
Here we say that a unipotent group $U$ over $K$ has good reduction outside
a finite set of finite places $S$ of $K$, if there exists a smooth
unipotent group scheme over $\OO_K[S^{-1}]$ whose generic fibre is isomorphic
to $U$.
%Note that the counter-examples in \S \ref{sec:unipotent_sch} do not apply
%as $W_p \times \QQ \cong \Ga^2$. 
We construct counter-examples by exploiting a relationship between nilpotent
Lie algebras and unipotent algebraic groups. For each $\lambda \in \ZZ$, 
we   define a $7$-dimensional 
nilpotent Lie algebra $\mathfrak{u}_\lambda$ over $\ZZ$ as follows. As a $\ZZ$-module $\mathfrak u_\lambda$ has a 
basis $x_1,\ldots,x_7$, and the Lie bracket is given by
\begin{align*}
	[ x_1,x_i] & = x_{i+1} \quad (2 \leq   i \leq 6), \qquad
	[ x_2,x_3]  = x_5,  \qquad [ x_2,x_4]  = x_6,\\
	[ x_2,x_5] & = \lambda x_7, \qquad \qquad \qquad \qquad  [ x_3,x_4]  = (1 - \lambda)x_7. 
\end{align*}
These are the Lie algebras with label $123457_I$ in \cite{See93}.
We chose these Lie algebras as their generic fibres are 
pairwise non-isomorphic over $\bar \QQ$.

A standard argument using the Baker-Campbell-Hausdorff formula 
(see \cite[\S IV.2.4]{DG70}) yields a smooth unipotent  group scheme
$U_\lambda$ over $\ZZ[1/7!]$ whose Lie algebra is $\mathfrak{u}_\lambda \otimes \ZZ[1/7!]$.   In particular, we obtain infinitely many pairwise
non-$\Qbar$-isomorphic unipotent algebraic groups over $\QQ$ of dimension $7$ which have good reduction
outside  $\{2,3,5,7\}$.

\end{document}